\documentclass[a4paper]{article}

\usepackage[english]{babel}
\usepackage[utf8x]{inputenc}
\usepackage{amsmath}
\usepackage{graphicx}
\usepackage[colorinlistoftodos]{todonotes}
\usepackage{authblk}
\usepackage{fontawesome}

\usepackage{amsmath}
\usepackage{epsfig,amsthm}
\usepackage{latexsym}
\usepackage{amsfonts}
\usepackage{amssymb}
\usepackage{amscd}
\usepackage{mathrsfs}
\usepackage[all,cmtip]{xy}
\usepackage{enumerate}
\usepackage{tikz}
\usepackage{arydshln}
\usepackage{bbm}
\usepackage{mathtools}
\usepackage{leftindex}
\usepackage{mathdots}
\usepackage{centernot}
\usepackage{stmaryrd}

\usepackage{tocloft}
\usepackage{amsfonts}
\usepackage{color}
\usepackage[colorlinks]{hyperref}
\hypersetup{pdfstartview={FitH},         linkcolor=blue,  citecolor=green }

\numberwithin{equation}{section} 

\usepackage[a4paper,bindingoffset=0.2in,left=1in,right=1in,top=1in,bottom=1in,footskip=.25in]{geometry}

\usepackage{fancyhdr}
\AtEndDocument{%
  \par
  \medskip
  \begin{tabular}{@{}l@{}}%
    \textsc{Xiamen University Malaysia}
    \\
     \textsc{Jalan Sunsuria, Bandar Sunsuria, 43900 Sepang, Selangor, Malaysia}\\
    \textit{E-mail address}: \texttt{\href{mailto:kamfai.tam@xmu.edu.my}{kamfai.tam@xmu.edu.my}}
  \end{tabular}}

\newtheorem{thm}{Theorem}[section]

\newtheorem{prop}[thm]{Proposition}

\theoremstyle{definition}
\newtheorem{dfn}[thm]{Definition}
\newtheorem{rmk}[thm]{Remark}

\newtheorem{conj}[thm]{Conjecture}

\title{A conjectural construction of Arthur packets in Fargues-Scholze's categorical local Langlands correspondence}
\author{
 Geo Kam-Fai Tam }
 \affil{Xiamen University Malaysia
 \\
 {\it Email address}: \href{mailto:kamfai.tam@xmu.edu.my}{kamfai.tam@xmu.edu.my}
}
\date{\vspace{-0.5cm}\today}

\begin{document}
\maketitle


\begin{abstract}

We present a conjectural construction of Arthur packets within Fargues-Scholze's framework for the categorical local Langlands correspondence (CLLC). This construction consists of three parts. We first provide an overview the main statement of the CLLC, the underlying moduli stacks -- $\mathrm{Par}_G$ of parameters and $\mathrm{Bun}_G$ of $G$-bundles -- on the two sides of the correspondence, and its relation to representations of reductive p-adic groups. We then review the geometric Satake correspondence in order to define Hecke operators and the spectral action of sheaves on $\mathrm{Par}_G$ on sheaves on $\mathrm{Bun}_G$, and to construct semisimple parameters using excursion data. Finally, we generalize the geometric construction of Arthur packets from pushing-forward skyscraper sheaves on the regular conormal bundle of $\mathrm{Par}_G$ over $\mathbb C$ to a conjectural analogous operation on the stack of singularities on $\mathrm{Par}_G$ over $\overline{\mathbb Q}_\ell$.

\end{abstract}

\tableofcontents

\section{Introduction}

Let $G$ be a reductive group over a local field $F$, where $F$ is non-Archimedean with residue field $\mathbb F_q$ of characteristic $p>0$. For simplicity, we assume $G$ to be quasi-split over $F$. Fix $\Lambda = \mathbb C$ or $\overline{\mathbb Q}_\ell$ ($\ell\neq p$), the scalar field over which our representations and sheaves will be defined. Denote by $\hat G$ the dual group of $G$ over $\Lambda$. The main statement of the classical local Langlands correspondence (LLC) predicts a map $\pi \mapsto \varphi_\pi$ from an irreducible smooth $\Lambda$-representation $\pi$ of $G(F)$ to its Langlands parameter $\varphi = \varphi_\pi$, which is the $\hat G (\Lambda)$-conjugacy class (or sometimes called the orbit) of a continuous 1-cocycle from the Weil group $W_F$ of $F$ to $\hat G (\Lambda)$. This prediction has been verified for $G = \mathrm{GL}_n$ (e.g. \cite{Harris-Taylor, Scholze-LLC-GLn}) and for $G$ an orthogonal or symplectic group \cite{Arthur-book}. The correspondence is a bijection in the $\mathrm{GL}_n$-case and is finite-to-one in the classical group case, in which the preimage set $\Pi_\varphi$ of each parameter is called a Langlands packet, or more commonly an L-packet.

Besides its number theoretic origins, namely the Artin reciprocity in local class field theory (see, e.g., various chapters in \cite{Cassels-Frohlich-ANT}, especially the last one about Tate's thesis) and its generalization to non-abelian groups e.g., \cite{Jacquet-Langlands-GL2, BH-GL2}, motivated by equating the analytic invariants (L-functions and epsilon factors) on both sides of the correspondence, one may wonder why the LLC is set up in this way. As a geometric answer to this question, one of whose origins can be traced back to \cite{Vogan-LLC}, Fargues-Scholze \cite{FS-main} upgrades the main statement of the LLC into a derived categorical equivalence between lisse-{\'e}tale sheaves on the moduli stack $\mathrm{Bun}_G $ of $G$-bundles over the Fargues-Fontaine curve, a geometric object encoding the analytic geometry of $F$, and the coherent sheaves on another moduli stack $\mathrm{Par}_G =[Z^1(W_F,\hat G)/\hat G]$ of Langlands parameters over $\Lambda$. 
We call this upgraded statement the \emph{categorical local Langlands correspondence} (CLLC).

The above category of sheaves on $\mathrm{Bun}_G$ encodes information about the representation theory of $G(F)$. To retrieve the classical LLC from sheaves back to representations, we may  look at different `parabolic levels' of parameters. If $\varphi$ is elliptic, i.e., $\varphi(W_F)$ is not contained in any parabolics of ${} \hat G(\Lambda)$, then Fargues conjectures that representations in $\Pi_\varphi$ are given by applying the so-called \emph{spectral action} of the skyscraper sheaf of $\mathrm{Par}_G$ supported at $\varphi$ to the Whittaker sheaf on $\mathrm{Bun}_G$ (a slight upgrade of the Whittaker representation of $G(F)$). At another extreme, if the image of $\varphi$ lies in the center of $\hat G(\Lambda)$ after being twisted by the modular character of a Borel subgroup of $\hat G$, i.e., when non-trivial monodromy arises around the point in $\mathrm{Par}_G$ representing $\varphi$, then one is led to study the neighboring geometry of $\varphi$ in the nilpotent cone, which is a singular variety. This is one of the reasons why derived algebraic geometry comes in. In any case, one may still conjecture that $\Pi_\varphi$ is obtained by applying the spectral action of (the proper pushforward of) the skyscraper sheaf at $\varphi$ to the Whittaker sheaf of $G$.

Let's return to the representation theory of $G(F)$ as a topological group, and assume $\Lambda = \mathbb C$ for convenience. It is known that tempered representations of $G(F)$ behave well under the LLC when considered with the unitary dual (i.e. the representation spectrum of $L^2(G(F))$) or the theory of twisted endoscopy (as an instance of the functoriality principle). To extend the theory to non-tempered representations, Arthur introduces in \cite{Arthur-UnipotentAutomorphicRepresentations} an enlarged version of L-packets, commonly called the Arthur packets. These are again finite sets of representations, but unlike the L-packets, they may not form a partition (i.e., they may have non-empty intersections). He parameterizes these packets by the so-called Arthur parameters, which are 
smooth morphisms $\psi: W_F \times \mathrm{SL}_2(\Lambda) \times  \mathrm{SL}_2(\Lambda)\rightarrow \hat G(\Lambda)\rtimes W_F$. Eventually, he proved in his monumental colloquium \cite{Arthur-book} that representations within an Arthur packet satisfy crucial relations such as the endoscopic character identity.

One may now wonder whether Arthur packets can be constructed in Fargues-Scholze's categorical framework. The main purpose of this note is to explain such a possibility, which has been already hinted in various literature under different setups.

For example, when $F$ is an archimedean local field, \cite{ABV} has constructed a version of Arthur packets by studying the microlocal geometry on the 
conormal bundles around an orbit in $\mathrm{Par}_G$, in which case representations in an Arthur packet are parametrized (although non-bijectively) by the equivariant local systems of the corresponding microlocal fundamental group,  defined using the conormal bundle over $\mathrm{Par}_G$, at the orbit. Based on this construction, Cunningham and his team have developed in \cite{Cunningham-Voganish-begins} an analogous non-Archimedean theory for constructing the so-called ABV-packets via a particular vanishing cycle functor, {\it cf.} \cite[Expos{\'e}s XIV]{SGA7II}, relating sheaves on $\mathrm{Par}_G$ and those on its conormal bundle. When restricted to the regular part of the conormal bundle, whose orbits are parameterized by the Arthur parameters, the team shows by examples that the ABV-packets so constructed coincide with the packets originally defined by Arthur.

To connect the above microlocal geometric setup with the categorial framework, we regard the conormal bundle as a singularity stack $\mathrm{Sing}_G$ over $\mathrm{Par}_G$, and pushforward  by the adjoint of the vanishing cycle functor the skyscraper sheaf on $\mathrm{Sing}_G$ supported at the orbit corresponding to $\psi$. We call the resulting sheaf $\mathcal A_\psi$ the {\it Arthur sheaf} as in \cite{Cunningham-Voganish-begins}. At this point we see that the construction of $\mathcal A_\psi$ is completely analogous to the sheaf $\mathcal F_\varphi$ in Fargues' conjecture for elliptic Langlands parameters. Assuming the version of this conjecture for general parameters, the rest of the work to obtain the Arthur packet $\Pi_\psi$ is then straightforward: representations in $\Pi_\psi$ should be given by the spectral action of the Arthur sheaf to the Whittaker sheaf.

One feature of upgrading Arthur's theory to the above categorical framework is that: while Arthur's theory mainly focuses on understanding unitary representations of $G(F)$ (hence requiring $\Lambda = \mathbb C$), Fargues-Scholze's framework can be applied to other scalar fields such as $\Lambda =\overline{\mathbb Q}_\ell$ (or even just an algebra over $\mathbb Z_\ell$), allowing tools from other important theories, such as $\ell$-adic cohomology, to be brought into service. One may wonder whether there is a `rational' or even an `integral' version of Arthur's theory.
Another feature is that: in a sense, \cite{Cunningham-Voganish-begins} only studies the regular parts of the Arthur sheaves, although these are enough to verify the existing Arthur packets by examples. When extending from the regular conormal bundle to the whole one, we must encounter singularities around orbits of Arthur parameters, and therefore cannot avoid the corresponding derived geometry.

We now provide an outline of this note, which is divided into three parts. 
\begin{enumerate}[(1)]
\item The first part is a very rough summary of \cite[Ch III, VIII]{FS-main}. We first provide an overview of the main statement of the CLLC. We then recall the two main players: $\mathrm{Par}_G$ and $\mathrm{Bun}_G$, their definitions, and the derived categories of sheaves on these moduli stacks. We also relate the CLLC back to the classical LLC at the end.

\item In the second part, we summarize from \cite[Ch VI, VIII, XI, and X]{FS-main} some crucial facts about the geometric Satake equivalence, and use it to describe how sheaves on $\mathrm{Par}_G$ act on those on $\mathrm{Bun}_G$ by the spectral action, which is the key connection of the two sides of the CLLC. For instance, one may use the Hecke operators, combined with various excursion data, to determine the semi-simplification of the Langlands parameter of an irreducible representation of $G(F)$. Finally, we state Fargues' conjecture about L-packets corresponding to elliptic parameters, and extend it to general parameters via parabolic induction.

\item In the third part, we summarize various parts of \cite{Cunningham-Voganish-begins} by recalling first the definition of Arthur parameters and the associated Arthur packets, and then the construction of Arthur sheaves using the vanishing cycle functors from parameter stacks to regular conormal bundles in the p-adic case. We attempt to generalize these constructions to the categorical framework, and at the end formulate Conjecture \ref{main conjecture} on constructing Arthur packets categorically, assuming and closely resembling Fargues' conjecture.

\end{enumerate}
At the end of each part, we summarize some heuristic calculations on $G = \mathrm{GL}_2$ as an example of the described theories.

We emphasize that the primary purpose of this note is only to explain how Arthur's theory and the CLLC can be connected together. We hope that such a connection can benefit mathematicians from different backgrounds and allow generalizations of existing theories and further developments on current research. As a disclaimer, this note offers essentially no new theorems. All propositions and conjectures in this note can be found in the quoted references, mostly in \cite{FS-main}, \cite{FS-notes}, \cite{Fargues-overview}, \cite{Imai-notes-FS}, \cite{Hansen-Beijing-notes}, \cite{Cunningham-Voganish-begins}. Readers are strongly advised to visit these references for the original discussions and any missing details.

The author has also suppressed discussions about the relative Fargues-Fontaine curve and topics related to perfectoid spaces and condensed mathematics, which occupy some crucial parts of \cite{FS-main}, not only for the obvious reason that these subjects are highly technical, but also for the intention of relating this note to some classical geometric representation theory (e.g., \cite{Chriss-Ginzburg}) and another analogous research program known as the geometric Langlands conjecture (GLC) \cite{Arinkin-Gaitsgory-2015, Gaitsgory-GLC1}, in which they declare a similar categorical equivalence statement with the relative Fargues-Fontaine curve replaced by a smooth complete curve over a field of characteristic 0. The relative Fargues-Fontaine curve in this note will be called `the curve'. One may wonder whether the theory of CLLC is functorial with respect to this curve input.

\section{Categorical LLC}
\label{section Categorical LLC}

Let's fix the notation in this note.

Let $p \in \mathbb Z$ be a prime and  $F$ be a $p$-adic field. Put $\Gamma_F = \mathrm{Gal}(\bar F/ F)$. Denote the residue field of $F$ by $k$, which consists of $q$ elements. Let $\dot{\mathrm{Fr}}$ be a lifting of the Frobenius from $\overline{k}$ to $\breve{F}$, the completion of the maximal unramified extension of $F$.

Let $W_F$ be the Weil group of $F$, a dense subgroup of $\Gamma_F$. Most of the time, actions of $\Gamma_F$ on various objects factor through subgroups of finite index, in which cases we may replace $\Gamma_F$ by $W_F$ without any notice.

Let $G$ be a quasi-split reductive group over $F$. Fix a pinning $(T,B,\{x_\mu\}_{\mu})$, and hence a based root datum $(X^*,\Delta, X_*, \Delta^\vee)$ of $  G$. An action of $W_F$ on the dual group $\hat G$ is defined such that $W_F$ fixes the dual based root datum $(X_*, \Delta^\vee, X^*,\Delta)$.

Fix another prime  $\ell\neq p$ (which may be taken to be large enough) and a scalar field $\Lambda = \bar{\mathbb Q}_\ell$.

\subsection{An overview}

We loosely define
\begin{align*} 
\mathrm{Bun}_G &= \text{the moduli stack of $G$-bundles over the (relative Fargues-Fontaine) curve } X,
\\
\mathrm{Par}_G & = \text{the moduli stack of L-parameters (continuous 1-cocycles $\varphi:W_F\rightarrow\hat G$) of $G$},
\end{align*}
while the formal definitions will be given in Sections \ref{subsection Bun-G} and \ref{subsection Par-G} respectively. Believing these moduli stacks are important geometric objects with rich topological structures, their loose definitions are sufficient to state the main conjecture of the CLLC.

\begin{conj}
\label{FS-main-statement}
\cite[Conj X.1.4]{FS-main} (the categorical local Langlands correspondence). There is an equivalence of categories
\begin{equation}
\label{FS-main-statement-equivalence}
\mathcal D_{\mathrm{lis}}(\mathrm{Bun}_G)^\omega \xrightarrow{\quad \simeq \quad} \mathcal Coh^{b,\mathrm{qc}}(\mathrm{Par}_G)
\end{equation}
where 
\begin{itemize}
\item $\mathcal D_{\mathrm{lis}}(\mathrm{Bun}_G)$ is the derived category of lisse-{\'e}tale $\Lambda$-sheaves on $\mathrm{Bun}_G$ and $\mathcal D_{\mathrm{lis}}(\mathrm{Bun}_G)^\omega$ is the subcategory of compact objects, and

\item $\mathcal Coh^{b,\mathrm{qc}}(\mathrm{Par}_G)$ is the derived category of bounded coherent complexes of $\Lambda$-sheaves with quasi-compact support.

\end{itemize}

Moreover, there is an action of  $ \mathcal Coh^{b,\mathrm{qc}}(\mathrm{Par}_G)$ on $\mathcal D_{\mathrm{lis}}(\mathrm{Bun}_G)$, known as the spectral action, such that, by viewing $\mathcal Coh^{b,\mathrm{qc}}(\mathrm{Par}_G)$ acting on itself by the usual tensor-product, the equivalence (\ref{FS-main-statement-equivalence}) is $ \mathcal Coh^{b,\mathrm{qc}}(\mathrm{Par}_G)$-linear. 
\qed
\end{conj}

As an interesting feature, $\mathrm{Bun}_G$ and $\mathrm{Par}_G$ are moduli of objects arising from different natures (coherent v. {\'e}tale), yet their sheaf theories are closely related at the derived categorical level ({\'e}tale v. coherent). To establish the $\mathcal Coh^{b,\mathrm{qc}}(\mathrm{Par}_G)$-linear equivariance, we require machinery from the theory of Satake category, which will be covered in Section \ref{section Geometric Satake}.

\begin{rmk} The derived category $\mathcal D_{\mathrm{lis}}(\mathrm{Bun}_G)$ should be regarded as a \emph{condensed stable $\infty$-category} in the sense of Clausen-Scholze. We refer to \cite[Ch VII especially Sec VII.6, and Ch IX.1]{FS-main} for details.
A more appropriate notation is $\mathcal D_\blacksquare$, but we simply put it as $\mathcal D_{\mathrm{lis}}$ and hope that this won't cause too much confusion or any inaccuracies. 
\qed
\end{rmk}

\begin{rmk}
According to \cite[Conj 1.7.3]{Hansen-Beijing-notes}, the equivalence (\ref{FS-main-statement-equivalence}) is conjectured to be restricted from the functor 
\begin{equation}
\label{FS-main-statement-equivalenc, upgraded-to-Coh}
c_{\mathfrak f}:\mathcal D_{\mathrm{lis}}(\mathrm{Bun}_G)\xrightarrow{}\mathcal{QC}oh(\mathrm{Par}_G),
\end{equation}
where the RHS is the derived category of quasi-coherent complexes. It is also conjectured that $c_\mathfrak f$ is 
the right adjoint of the spectral action functor 
$$a_{\mathfrak f}: \mathcal{QC}oh(\mathrm{Par}_G) \to \mathcal D_{\mathrm{lis}}(\mathrm{Bun}_G), \quad \mathcal F\mapsto \mathcal F * \mathcal W_\mathfrak f,$$
where $\mathcal W_{\mathfrak f}$ is the Whittaker sheaf on $\mathrm{Bun}_G$, i.e., the proper pushforward to $\mathrm{Bun}_G$ of the Whittaker representation $ \mathrm{cInd}_{U(F)}^{G(F)}\mathfrak f$, where $U$ is the unipotent radical of $B$ and $\mathfrak f$ is a non-degenerate character of $U(F)$. In other words, knowledge the spectral action suffices to determine the equivalence (\ref{FS-main-statement-equivalence}). Occasionally we will relax the condition and allow discussions of sheaves in $\mathcal D_{\mathrm{lis}}(\mathrm{Bun}_G)$, i.e., complexes that are not necessarily compact. For instance, it is conjectured that the structure sheaf $\mathcal O_{\mathrm{Par}_{G}}$ on $\mathrm{Par}_{G}$ corresponds to $\mathcal W_{\mathfrak f} $ on $\mathrm{Bun}_G$.
\qed
\end{rmk}

\begin{rmk}
Denote by $\mathcal Perf(\mathrm{Par}_G)$ the full subcategory of perfect complexes \cite[Sec 15.76]{stacks-project} of $\mathcal Coh^{b}(\mathrm{Par}_G)$. Since our coefficient field $\Lambda$ is of characteristic 0, the situation simplifies: $\mathcal Perf(\mathrm{Par}_G)= \mathcal Coh^{b}(\mathrm{Par}_G)$. Moreover, it is known \cite[Cor
3.22]{BEN-ZVI-FRANCIS-NADLER} that the inductive limit $\mathrm{Ind} \mathcal Perf^{\mathrm{qc}}(\mathrm{Par}_G) $ is $\mathcal{QC}oh(\mathrm{Par}_G)$, which suggests the enhanced correspondence (\ref{FS-main-statement-equivalenc, upgraded-to-Coh}).
\qed\end{rmk}

\subsubsection{$\mathrm{Par}_G$}
\label{subsection Par-G}

We briefly discuss the first main player $\mathrm{Par}_G$. Let $Z^1(W_F , \hat G)$ be the (ind-)scheme representing the functor
$$\Lambda\text{-}\mathcal{A}lg\to \mathcal{S}et, \quad R \mapsto \{\text{continuous 1-cocycles }W_F\to \hat G(R)\}.$$ 
The ind-scheme structure is defined by viewing $Z^1(W_F,\hat G)$ as a union of open and closed affine subschemes $Z^1(W_F/P,\hat G)$ where $P$ ranges over all open subgroups of the wild inertia subgroup $P_F$ of $W_F$ (see \cite[Th I.8.1]{FS-main}). We put 
$$\mathrm{Par}_G := [Z^1(W_F , \hat G)/\hat G],$$ the moduli stack whose $\Lambda$-points parametrize $\hat G$-conjugacy classes of continuous 1-cocycles $W_F\to {}\hat G(\Lambda) $.

We call an L-parameter {\bf semisimple} if whenever it factors over a relevant (in the sense of \cite[Sec 3]{Borel-L-functions} and relative to the chosen root datum of $G$) parabolic subgroup $\mathcal P$ of $\hat G(\Lambda)$, it also factors over a Levi in $\mathcal P$. These are precisely closed $\hat G$-conjugacy classes in $Z^1(W_F , \hat G)$.

We denote by $\mathrm{par}_G$ the {\bf coarse quotient space} of $\mathrm{Par}_G$, i.e., the GIT-quotient $Z^1(W_F , \hat G)\sslash \hat G$. In other words, the structure ring of $\mathrm{par}_G$ is $\mathcal O(Z^1(W_F , \hat G))^{\hat G}$. By \cite[Prop VIII.3.8]{FS-main}, the variety of closed points $|\mathrm{par}_G|$ of $\mathrm{par}_G$ (i.e., geometric points of
$\mathrm{Spec} (\mathcal O(Z^1(W_F , \hat G))^{\hat G})$, which are the same as points in $|\mathrm{Par}_G|$) parametrizes semisimple L-parameters.

\begin{rmk}
Since $W_F/P$ is finitely generated as an abstract group, we see that $Z^1(W_F/P,\hat G)$ is finite dimensional, and hence $\mathrm{Par}_G $ is a disjoint union of finite type algebraic stacks over $\Lambda$. 
Another crucial property  \cite[Def VIII.1.3]{FS-main} is that $Z^1(W_F/P,\hat G)$ (as well as $Z^1(W_F,\hat G)$) is a flat and local complete intersection (i.e., syntomic \cite[Def VIII.2.6]{FS-main}). These properties are important for studying the singularity of $\mathrm{Par}_G$ in Section \ref{subsection Conjectural Arthur packet in CLLC}.
\qed\end{rmk}

\subsubsection{$\mathrm{Bun}_G$}
\label{subsection Bun-G}

We now discuss the second main player $\mathrm{Bun}_G$. Hidden in the background are $\mathcal Perf_k$, the category of perfectoid spaces over $k$, and $X$, the functor of the curve on $\mathcal Perf_k$.
The functor $\mathrm{Bun}_G$ is the (small v-pre)stack taking each $ S\in \mathcal{P}erf_k$ to the groupoid
of (finite rank) $G$-bundles on $X_S$: 
$$\mathrm{Bun}_G(S) = \{ \mathcal E: \text{$G$-bundles over $X_S$}\},$$
There is a stratification of $\mathrm{Bun}_G$ as 
\begin{equation}
\label{stratification BunG}
\mathrm{Bun}_G = \bigsqcup_{b\in B(G)} \mathrm{Bun}^b_G ,
\end{equation}
where $B(G)$ is the Kottwitz set of $G$-isocrystals, i.e., the set of Frobenius-conjugacy classes in $G(\breve{F})$. Each substack embedding
$$i_b: \mathrm{Bun}^b_G \hookrightarrow \mathrm{Bun}_G $$ is defined by identifying $B(G)$ with the closed-point subset $|\mathrm{Bun}_G|$ as follows. The set $B(G)$ is equipped with two maps, 
\begin{align}
&\text{the Kottwitz invariant $\kappa: B(G)\to \pi_1(G)_{\Gamma_F}$, \quad  and }
\\
&\text{the Newton/Harder-Narasimhan polygon 
$\nu: B(G)\rightarrow (X_*(T)^+_{\mathbb Q} )^{\Gamma_F}$.}
\end{align}
Here $X_*(T)_{\mathbb Q}^+$ is the poset of dominant cocharacters of $G$ with rational coefficients. We order $B(G)$ by putting $b\leq b'$ if and only if $\kappa(b) = \kappa(b')$ and $\nu(b)\leq \nu(b')$. In this way, each stratum in (\ref{stratification BunG}) contains a closed point of $\mathrm{Bun}_G$, and therefore $|\mathrm{Bun}_G|\rightarrow B(G)$ is continuous if $B(G)$ is equipped with the order topology:
$$b\leq b' 
\quad\Leftrightarrow\quad b\in \overline{\{b'\}},$$
 and is indeed a homeomorphism by \cite{Viehmann}.

A $G$-bundle $\mathcal E$ is  {semi-stable} if it has no proper non-zero subbundles with polygon $>\nu(\mathcal E)$.  Let $\mathcal E_b$ be the canonical $G$-bundle defined by the Frobenius action twisted by $b\in B(G)$. We call $b$ {\bf basic} if $\mathcal E_b$ is semi-stable. Denote by $B(G)_{\mathrm{bas}} $ the subset of $B(G)$ consisting of basic $G$-isocrystals.

As a form of Kottwitz duality, we have bijections \cite[Th I.4.1 and Cor IV.1.23]{FS-main}
$$B(G)_{\mathrm{bas}} \xrightarrow{\kappa}\pi_1(G)_{\Gamma_F} \rightarrow \pi_0(\mathrm{Bun}_G),$$ 
i.e., basic isocrystals parametrize the connected components of $\mathrm{Bun}_G$. Each component $\mathrm{Bun}_G^b$, for $b\in B(G)_{\mathrm{bas}}$, contains a unique semi-stable point (corresponding to $\mathcal E_b$), and defines an open immersion into $\mathrm{Bun}_G$. 

For example, we now look at $\mathrm{Bun}_G$ when $G = \mathrm{GL}_n$ where $n=1$ or $2$. For $\mathrm{GL}_1$, we have $B(G) = B(G)_{\mathrm{bas}}= \mathbb Z$, and  
\begin{equation}
\label{Bun-GL1}
\mathrm{Bun}_{\mathrm{GL}_1}^{}= \mathrm{Bun}_{\mathrm{GL}_1}^{\mathrm{ss}}= \bigsqcup_{\mathbb Z}[*/\underline{F^\times}],
\end{equation}
where $*$ is a shorthand notation for a point, over $S$ in this context, and each stack $[*/\underline{F^\times}]$ represents the classifying space of degree $n$ line bundles on the curve $X$. (If $\mathcal X$ is a topological space, we define $$\underline{\mathcal X}: \mathcal Perf_k \to \mathcal Set, \quad  S \mapsto \mathcal C(|S|, \mathcal X) \text{ the set of continuous maps $|S|\to \mathcal X$},$$  
the corresponding constant sheaf functor.)

For $G=\mathrm{GL}_2$, whose inner forms are parametrized by $H^1(W_F, \mathrm{PGL}_2)$, we use the Kottwitz duality 
\begin{equation}
\label{Kottwitz duality}
\pi_1(G_{\mathrm{ad}})_{\Gamma_F}  \cong X^*((Z\hat G_{\mathrm{sc}})^{\Gamma_F})
\end{equation}
 to obtain $B(G)_{\mathrm{bas}} = \mathbb Z/2$. Here the non-trivial basic isocrystal is represented by anti-diag$(\varpi,1)\in G(\breve{F})$. By considering polygons, we identify $\mathbb Z/2$ with $\{0,\tfrac{1}{2}\}$, and put $G_0(F)=\mathrm{GL}_2(F)$ and $G_{1/2} = \mathrm{D}_2(F)^\times$, where $\mathrm{D}_2$ is the quaternion algebra over $F$. The stratification of $\mathrm{Bun}^{\mathrm{ss}}_G$ is given by 
$$ \mathrm{Bun}_{\mathrm{GL}_2}^{\mathrm{ss}}= \bigsqcup_{\mathbb Z}[*/\underline{\mathrm{GL}_2(F)}] \sqcup \bigsqcup_{\tfrac{1}{2}\mathbb Z \smallsetminus \mathbb Z}[*/\underline{D^\times_2(F)}],$$
where each point represents rank-2 vector bundles on $X$ with a fixed Harder-Narasimhan component. We refer to \cite{Nyguen-GLn} for the calculations involving other $\mathrm{GL}_n$.

In general, $ \pi_0(|\mathrm{Aut}(\mathcal E_b)|)$ is equal to the centralizer $G_b$ of $b$ in $G$, i.e.,  an inner form of a Levi subgroup of (the quasi-split inner form of) $G$. If $b\in B(G)_{\mathrm{bas}}$, then $G_b$ is called an {\bf extended pure inner form} of $G$. By \cite[Th VII.7.1]{FS-main}, the natural map
$$\mathrm{Bun}^b_
G = [* / \mathrm{Aut}(\mathcal E_b)] \to [*/\underline{G_b(F)}]$$
induces an equivalence 
$$\mathcal D_{\mathrm{lis}}(\mathrm{Bun}^b_
G)\cong \mathcal  Rep(G_b(F)),$$
where the RHS is the derived category of the abelian category of smooth $G_b(F)$-representations.

\begin{rmk}
By \cite[Th VII.7.3]{FS-main}, there is a semi-orthogonal decomposition of $\mathcal D_{\mathrm{lis}}(\mathrm{Bun}_G)$ into $\mathcal D_{\mathrm{lis}}(\mathrm{Bun}^b_
G)$ with $b$ ranging in $B(G)$, where  semi-orthogonality means that for any complexes $\mathcal F \in \mathcal D_{\mathrm{lis}}(\mathrm{Bun}^b_
G)$ and $\mathcal G \in \mathcal D_{\mathrm{lis}}(\mathrm{Bun}^{b'}_
G)$, we have $\mathrm{Hom}(i_{b!}\mathcal F, i_{b'!}\mathcal G)\neq 0$ only when $b\leq b'$. 
\qed\end{rmk}

\subsection{Relation to the classical LLC}

To see how the categorical conjecture  relates to the classical conjecture about irreducible representations of $G(F)$, we provide another version of Conjecture \ref{FS-main-statement} under the condition that $\varphi$ is elliptic (i.e., ${(Z\hat G)^{ W_F}}$ is of finite index in the stabilizer group $S_\varphi:=Z_{\hat G}(\varphi)$) in Conjecture \ref{FS-main-statement-classical} below.

Given a $\Lambda$-point $x_\varphi$ in $\mathrm{Par}_G$ representing an elliptic  L-parameter $\varphi$, let $C_\varphi$ be the connected component containing $x_\varphi$, whose points correspond to unramified twists of $\varphi$: 
$$\varphi\mapsto \varphi(\chi\circ \nu), \quad \chi\in X_*(((Z\hat G)^{W_F})^\circ),$$
where $\nu:W_F\to \Lambda^\times$ is the unramified character with $\nu(\dot{\mathrm{Fr}})=q^{-1}$. There exists a subcategory  $\mathcal D^{C_\varphi}_{\mathrm{lis}}(\mathrm{Bun}_G)^\omega$ of $\mathcal D^{}_{\mathrm{lis}}(\mathrm{Bun}_G)^\omega$ associated to $C_\varphi$, which is a direct summand of $\mathcal D^{}_{\mathrm{lis}}(\mathrm{Bun}_G)^\omega$ and such that, informally speaking, every Schur-irreducible object therein has its L-parameter in $C_\varphi$.

\begin{conj}
\label{FS-main-statement-classical}
\cite[Conj X.2.2]{FS-main}
Let $\varphi$ be an elliptic L-parameter. There exists a unique generic representation $\pi_\varphi$ of $G(F)$ with L-parameter $\varphi$, and an equivalence of categories
$$\mathcal Perf([*/S_\varphi])  =  \mathcal Rep_{\mathrm{fin}\text{-}\mathrm{proj}}(S_\varphi) \rightarrow 
\mathcal D^{C_\varphi}_{\mathrm{lis}}(\mathrm{Bun}_G)^\omega, \quad V \mapsto V*\pi_\varphi, $$
such that each $\rho \in  \mathcal Rep_{}(S_\varphi) ^{\mathrm{sim}}_{/\mathrm{iso}}$ (the set of isomorphism classes of simple objects) corresponds to a(n irreducible supercuspidal) representation $\pi_{\varphi,\rho}$ of $G_b(F)$, where $G_b$ is the inner form of $G$ defined by the $G$-isocrystal $b$ corresponding to $ \rho|_{(Z\hat G)^{W_F}}$ via Kottwitz duality (\ref{Kottwitz duality}).\qed\end{conj}

The Fargues-Scholze L-packet $\Pi_\varphi^{FS}$ then consists of $\pi_{\varphi,\rho}$ where $\rho$ ranges over $ \mathcal Rep_{}(S_\varphi) ^{\mathrm{sim}}_{/\mathrm{iso}}$. Each $\pi_{\varphi,\rho}$ is the representation conjecturally corresponding to $(\varphi,\rho)$ under the classical LLC. A more precise description of  $\pi_{\varphi,\rho}$, proposed by Fargues, will be given in Conjecture \ref{Fargues-conjecture-detailed}  after we develop a deeper understanding of the spectral action.

\begin{rmk} We describe a reduction of the action of $\mathcal Perf([*/S_\varphi])$ on $\mathcal D^{C_\varphi}_{\mathrm{lis}}(\mathrm{Bun}_G)^\omega$ in Conjecture \ref{FS-main-statement-classical}. Note that the full subcategory $\mathcal Perf([*/ ({S_\varphi}/(Z\hat G)^{W_F} )])$ of $\mathcal Perf([*/S_\varphi])$, consisting of those representations of $S_\varphi$ on which $(Z\hat G)^{W_F}$ acts trivially, acts on $\mathcal D^{C_\varphi}(\mathrm{Bun}^b_G)^\omega$, \emph{cf.} \cite[Sec X.2]{FS-main}.  Denote by $\mathcal D(G_b(F))_{\mathrm{fin},\varphi}$ the full subcategory of $\mathcal D^{C_\varphi}(\mathrm{Bun}^b_G)^\omega$ consisting of $\mathcal F\in \mathcal  Rep(G_b(F))$ such that
\begin{itemize}
\item $H^\bullet(\mathcal F):=\oplus_m H^m(\mathcal F)$ is of finite length; 

\item all  irreducible subquotients of $H^\bullet(\mathcal F)$ belong
to $\Pi_\varphi^{FS}$.

\end{itemize}
By  \cite[Cor 3.7(b)]{Kazhdan-Varshavsky}, the action $\mathcal Perf([*/ ({S_\varphi}/(Z\hat G)^{W_F} )])$ on $\mathcal D^{C_\varphi}(\mathrm{Bun}^b_G)^\omega$ preserves $\mathcal D(G_b(F))_{\mathrm{fin},\varphi}$.
\qed\end{rmk}

\section{Fargues-Scholze's construction}

We now turn to the technical aspects of the construction of the categorical equivalence (\ref{FS-main-statement-equivalence}) in the main Conjecture \ref{FS-main-statement}. In one direction, we employ the theory of the geometric Satake correspondence to construct sheaves of representations of $G(F)$ by applying the spectral actions of sheaves over the parameter stack $\mathrm{Par}_G$ on the Whittaker sheaf of $G$. In the opposite direction, we use excursion data to construct a semisimple Langlands parameter of a given irreducible representation of $G(F)$.

\subsection{Geometric Satake correspondence}
\label{section Geometric Satake}

Here we recall very briefly the formal definition of a Satake category. For motivation, we first consider its `degree-1 divisor' version, which is analogous to the construction in the literature \cite[Lec 19]{Scholze-Weinstein}, \cite{Zhu-affine-Grass}.

Let $B^+_{\mathrm{dR}}:\mathcal{ P}erf_{/k}\to \mathcal Ring_{top}$ be the functor of de Rham period ring \cite[Def 12.4.3]{Scholze-Weinstein}, i.e., roughly speaking, the complete local ring of the  curve at a point, and $B_{\mathrm{dR}}$ denote its field of fractions. We define the {\bf loop group} $LG$ and the {\bf jet group} $L^+G$, respectively, by 
\begin{equation}
\label{loop and jet groups, singleton divisor}
 LG (S) := G(B_{\mathrm{dR}}(S)) \hookleftarrow  L^+G (S) := G(B^+_{\mathrm{dR}}(S)) .
\end{equation}
The quotient stack $Gr_G := [ LG/L^+G ]$, called the {\bf Beilinson-Drinfeld Grassmannian}, can be realized as the moduli stack 
\begin{equation}
\label{BeilinsonDrinfeld Grassmannian}
Gr_G  = \{
(\mathcal E,\beta):\begin{smallmatrix}
\text{$\mathcal E$ is a $G$-bundle over $B_{\mathrm{dR}}^+$,}
\\
\text{$\beta$ is a trivialization of $\mathcal E$ over $B_{\mathrm{dR}}$}
   \end{smallmatrix} \}\xrightarrow{(\mathcal E,\beta)\mapsto \mathcal E} \mathrm{Bun}_G .
\end{equation}
Moreover, we define the double quotient stack, also known as the {\bf local Hecke stack} and realized as 
 $$\mathcal Hck_G :=[L^+G\backslash LG/L^+G ] =  \{
(\mathcal E_1,\mathcal E_2,\beta):\begin{smallmatrix}
\text{$\mathcal E_1,\mathcal E_2$ are $G$-bundles over $B_{\mathrm{dR}}^+$,}
\\
\text{$\beta:\mathcal E_1\xrightarrow{\simeq} \mathcal E_2$ over $B_{\mathrm{dR}}$}
   \end{smallmatrix} \}.$$
Note that the stratification of $\mathrm{Bun}_G$ using the HN-polygons, or equivalently, rational dominant cocharacters of $G$, induces a corresponding stratification on $\mathcal Hck_G $ via (\ref{BeilinsonDrinfeld Grassmannian}) and the canonical projection $Gr_G \to \mathcal Hck_G $.

Let 
$\mathcal Sat_G := \mathcal Sat(\mathcal  Hck_{G})$ be the full subcategory of \emph{universally locally acyclic (ULA)} and \emph{flat-perverse} objects of $\mathcal D_{\text{{\'e}t}}(\mathcal  Hck_{G} )$, called the {\bf Satake category}. It is a symmetric monoidal category via Lusztig's convolution product \cite[Sec VI.8]{FS-main}. The hypercohomology functor
$$H^\bullet: \mathcal  Sat_G \rightarrow \Lambda\text{-}\mathcal Vect$$
is then a symmetric monoidal functor, turning $\mathcal Sat_G$ into a (neutral) Tannakian category. The general theory of Tannakian categories determines the structure of the automorphism group of this functor.

\begin{prop}
\label{Tannakian group Langlands dual} \cite[Sec 2.5]{Zhu-affine-Grass}.
The (Tannakian) group $ \mathrm{Aut} H^\bullet$  is isomorphic to the Langlands dual group $\hat G$. 
\qed\end{prop}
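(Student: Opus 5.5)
The plan is to follow the standard Mirković–Vilonen strategy, adapted to the $B^+_{\mathrm{dR}}$-affine Grassmannian as in \cite{Zhu-affine-Grass} and \cite[Ch VI]{FS-main}. Since $\mathcal Sat_G$ is already a neutral Tannakian category with fibre functor $H^\bullet$, Tannakian reconstruction gives an affine group scheme $\tilde G := \mathrm{Aut}^\otimes H^\bullet$ over $\Lambda$ with $\mathcal Sat_G \simeq \mathrm{Rep}(\tilde G)$. Everything then reduces to identifying $\tilde G$ with $\hat G$: I will show that $\tilde G$ is a connected reductive group and compute its based root datum.

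\textbf{Step 1: the orbit stratification.} I will use the Cartan decomposition $LG = \bigsqcup_{\mu \in X_*(T)^+} L^+G\,\mu(\xi)\,L^+G$, where $\xi$ is a uniformizer of $B^+_{\mathrm{dR}}$. It stratifies $\mathcal Hck_G$ by Schubert cells indexed by dominant coweights. The dimension formula $\dim Gr_\mu = \langle 2\rho, \mu\rangle$ and the closure relations then give simple objects $\mathcal{IC}_\mu$, one for each $\mu \in X_*(T)^+$. By a parity argument, the stalks of $\mathcal{IC}_\mu$ vanish in degrees of the wrong parity, so $\mathcal Sat_G$ is semisimple and its simple objects are parametrized by $X_*(T)^+$. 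In particular $\tilde G$ is reductive; it is connected because no nontrivial finite quotient arises, since the $\mathcal{IC}_\mu$ generate under convolution without torsion phenomena.

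\textbf{Step 2: weight functors.} Using the semi-infinite orbits $S_\nu = LU\cdot \nu(\xi)$ for $\nu \in X_*(T)$, I will define $F_\nu(\mathcal F) = H_c^{\langle 2\rho,\nu\rangle}(S_\nu, \mathcal F)$. The key input is the MV dimension estimate $\dim (S_\nu \cap Gr_\mu) \le \langle\rho, \mu+\nu\rangle$. From it I will deduce that $F_\nu$ is exact, that $H^\bullet \simeq \bigoplus_\nu F_\nu$, and that this decomposition is compatible with convolution. To get compatibility with convolution, I will pass to the Beilinson–Drinfeld version over $\mathrm{Div}^2$ (fusion) and compare it with the hyperbolic localization functor for the $\mathbb G_m$-action given by $2\rho$. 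This produces a homomorphism $\hat T := \mathrm{Spec}\,\Lambda[X_*(T)] \to \tilde G$, i.e. a maximal torus whose character lattice is $X_*(T)$.

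\textbf{Step 3: identifying the root datum.} The weights of $H^\bullet(\mathcal{IC}_\mu)$ lie in the convex hull of $W\mu$, with $\mu$ occurring with multiplicity one. I will then use a classification of reductive groups by their dominant weights and tensor product behaviour. The simple coroots of $\tilde G$ are read off from the rank-one cases: the Levi subgroups $G_\alpha$ have semisimple rank one, and for these I will compute directly with the $\mathbb P^1$-bundles that form the minimal Schubert varieties. This shows that the roots of $\tilde G$ are the coroots of $G$ and vice versa. The result is the dual based root datum $(X_*, \Delta^\vee, X^*, \Delta)$, hence $\tilde G \simeq \hat G$.

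I expect the main obstacle to be Step 2 in the $B^+_{\mathrm{dR}}$ setting. The commutativity constraint, the exactness of $H^\bullet$, and the tensor compatibility of the weight functors all require the fusion product over $\mathrm{Div}^2$. They also require ULA hyperbolic localization, which in \cite[Ch VI]{FS-main} rests on the delicate geometry of diamonds rather than schemes. I would first try to reduce to the Witt vector or equal-characteristic affine Grassmannian via a specialization argument, since these have the same Schubert stratification and the same stalk cohomology. A further subtlety is the sign correction of the commutativity constraint needed to make $H^\bullet$ symmetric monoidal; without it one obtains only a super group.
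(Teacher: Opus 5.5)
Your outline is the standard Mirković--Vilonen argument: semisimplicity with simple objects $\mathcal{IC}_\mu$, $\mu\in X_*(T)^+$; weight functors from semi-infinite orbits giving $\hat T\to\mathrm{Aut}^\otimes H^\bullet$; then reading off the dual root datum via rank-one Levis. The paper gives no proof of its own and simply cites \cite[Sec 2.5]{Zhu-affine-Grass}, whose identification of the Tannakian group follows this route, as does \cite[Ch VI]{FS-main} in the $B^+_{\mathrm{dR}}$ setting, so your approach is essentially the one the paper relies on. Two small slips should be fixed: first, that $\hat T$ is a \emph{maximal} torus is not automatic, but it follows because the cohomological grading is the cocharacter $2\rho$ of $\hat T$ and $H^{\langle 2\rho,\mu\rangle}(\mathcal{IC}_\mu)$ is one-dimensional, which forces $2\rho$ to be regular and lets you identify its centralizer with $\hat T$ by comparing highest weights; second, for a semisimple-rank-one Levi such as $\mathrm{SL}_2$, where $\langle\alpha,\mu\rangle$ is always even, no Schubert variety is a $\mathbb P^1$ (the first nontrivial one is the singular quasi-minuscule surface), so the rank-one computation must be done there or after passing to a $\mathrm{GL}_2$-type group.
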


Put ${}^LG = \hat G\rtimes W_F $, commonly known as the L-group of $G$, where the $W_F$-action is defined by dualizing a chosen $W_F$-invariant pinning of $G$ (up to an explicit cyclotomic twist, see the discussion around \cite[Th VI.11.1]{FS-main}). We omit the details, but remark that a pinning is already chosen at the beginning of Section \ref{section Categorical LLC}.

We can now state the main result of the geometric Satake equivalence.
\begin{prop}
\label{MVG theorem of geometric Satake}
\cite{Mirkovic-Vilonen} (with a fixed choice of $\sqrt q\in \Lambda$). There is an
equivalence of symmetric monoidal categories
 $$\mathfrak S: \mathcal Rep_{\mathrm{fin\text{-}proj}}({}^LG)\rightarrow \mathcal Sat_G,$$
from the derived category of finite dimensional projective continuous $\Lambda$-representations of ${}^LG$ to the Satake category of $G$, such that $r_\mu:= \mathrm{Ind}^{{}^LG}_{\hat G\rtimes W_\mu}V_\mu$, where $V_\mu$ is the highest weight representation of $\hat G$ corresponding to a cocharacter $\mu$ of $G$ and $W_\mu$ is the stabilizer of $\mu$ in $W_F$, corresponds to $IC_\mu$, the canonical IC-sheaf on the stratum $\mathcal Hck_{\leq \mu}$.
\qed\end{prop}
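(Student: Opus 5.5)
The plan is to bootstrap from the $\hat G$-equivalence of Proposition \ref{Tannakian group Langlands dual} to the full $L$-group by adding a Weil-group descent datum and then identifying the simple objects. I would proceed in four steps.

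\textbf{Step 1 (geometric case).} Work first over $\bar F$ (equivalently, over $\breve F$ or $C$). Let $\mathcal Sat_{G_{\bar F}}$ be the Satake category there. The convolution product on $\mathcal Hck_G$ is symmetric monoidal via the fusion interpretation over products of copies of $\mathrm{Spd}\, \breve F$ (moving divisors apart, as in \cite[Sec VI.9]{FS-main}). The hypercohomology $H^\bullet$ is exact and faithful on ULA flat-perverse sheaves, which follows from the semi-infinite orbit (Mirković--Vilonen weight functor) decomposition
$$H^\bullet=\bigoplus_\lambda H_c^\bullet(S_\lambda\cap-,\,-),$$
concentrated in a single degree for each $\lambda$.

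\textbf{Step 2 (the Tannakian group).} Proposition \ref{Tannakian group Langlands dual} identifies $\mathrm{Aut}\,H^\bullet$ with $\hat G$. The weight functors supply the maximal torus $\hat T$, and the pinning is recovered from the Chern class of the determinant line bundle on $Gr_G$, which acts as a principal nilpotent. This upgrades the identification to one of pinned groups.

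\textbf{Step 3 (descent to $F$).} A sheaf on $\mathcal Hck_G$ over $F$ is a sheaf over $\bar F$ together with a continuous $W_F$-equivariance datum. Hence $\mathcal Sat_G$ is the category of $W_F$-equivariant objects in $\mathcal Sat_{G_{\bar F}}$, and by Tannakian formalism this category is $\mathcal Rep({}^LG)$, where $W_F$ acts on $\hat G$ through its action on the fiber functor.

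\textbf{Step 4 (computing the action).} The action of $W_F$ preserves the pinning up to an explicit cyclotomic character. That character arises because $H^\bullet(IC_\mu)$ lives in degrees of fixed parity with Tate twists $\Lambda(i/2)$. Normalizing with the chosen $\sqrt q$ absorbs the twist, so the action becomes the pinned dual action. This is the reason a choice of $\sqrt q$ enters the statement.

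For simple objects: the irreducible $L^+G$-equivariant perverse sheaves over $\bar F$ are the $IC_\mu$. A $W_F$-orbit of $\mu$ with stabilizer $W_\mu$ gives the sum of the $IC_{w\mu}$ over the orbit, which corresponds to $\mathrm{Ind}^{{}^LG}_{\hat G\rtimes W_\mu}V_\mu=r_\mu$. Passing to the finite projective (derived) version is formal, since both sides are generated by these objects.

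The main obstacle is Step 1 in the Fargues--Scholze setting. There the hypercohomology over a non-discrete base such as $\mathrm{Spd}\,B^+_{\mathrm{dR}}$ requires ULA and hyperbolic localization arguments, in order to show that $H^\bullet$ is exact, faithful and monoidal with a commutativity constraint. I would follow the Braden hyperbolic-localization argument of \cite[Sec VI.3--VI.7]{FS-main}, reducing fibrewise to the classical Witt-vector Grassmannian of \cite{Zhu-affine-Grass}.
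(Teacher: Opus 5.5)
The paper gives no argument for this proposition. It only cites Mirkovi\'c--Vilonen, whose theorem (classical affine Grassmannian over an algebraically closed field) yields only the $\hat G$-statement of Proposition~\ref{Tannakian group Langlands dual}. The $W_F$-equivariant, $\sqrt q$-normalized version for ${}^LG$ on the $B^+_{\mathrm{dR}}$-Hecke stack is really \cite[Th VI.11.1]{FS-main}, which the paper alludes to just before. Your outline is correct in its overall strategy and reconstructs that route:
\begin{itemize}
\item weight functors via hyperbolic localization, giving exactness and faithfulness of $H^\bullet$;
\item fusion for the commutativity constraint;
\item descent along the $\underline{W_F}$-torsor $\mathrm{Spd}\,C\to\mathrm{Div}^1_X$, giving $\hat G\rtimes W_F$ for some action;
\item identification of that action as the pinned one twisted by the cyclotomic character through $\rho_{\mathrm{ad}}$, which a square root of that character (the choice of $\sqrt q$) removes.
\end{itemize}
The citation buys brevity. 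Your sketch shows exactly what the proposition adds to Mirkovi\'c--Vilonen, namely where $W_F$ and $\sqrt q$ enter. The price is resting on the heavy inputs the citation hides: ULA sheaves and hyperbolic localization on diamonds, and Drinfeld's lemma.

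Four details need fixing, none of them fatal.
\begin{itemize}
\item Only $C$ works as the geometric base. Since $\mathrm{Spd}\,\breve F=[\mathrm{Spd}\,C/\underline{I_F}]$, working over $\breve F$ gives $\mathcal Rep(\hat G\rtimes I_F)$, not $\mathcal Rep(\hat G)$. So your parenthetical ``equivalently over $\breve F$'' is wrong.
\item The matching $r_\mu\leftrightarrow IC_\mu$ is asserted, not checked. Extensions of $V_\mu$ to $\hat G\rtimes W_\mu$ differ by characters of $W_\mu$, so you must compute the $W_\mu$-action on the highest-weight line $\mathrm{CT}_\mu(IC_\mu)$. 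For the unnormalized $IC_\mu$ this line is $H^{2d}_c(S_\mu\cap Gr_\mu,\Lambda)$, classically $\Lambda(-d)$ with $d=\langle 2\rho,\mu\rangle$. It becomes trivial only after both the half-twist $IC_\mu(d/2)$ and the $\sqrt q$-untwisting of the group (equivalently, of the fibre functor). That is what ``canonical'' has to mean in the statement.
\item Your closing claim that both sides are generated by the $r_\mu$ is false. For split $G$ every $r_\mu$ has trivial $W_F$-action, so no nontrivial character of $W_F$ is generated; you also need the $\mathcal Rep(W_F)$-linear structure. The claim is unnecessary anyway: the equivalence already comes out of Steps 3--4. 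Since $\mathcal Sat_G$ is a category of perverse objects, ``derived'' in the statement should just be read as finite projective representations.
\item The principal nilpotent coming from the Chern class of a determinant line bundle is an input from the scheme-theoretic setting. On the $B^+_{\mathrm{dR}}$-Grassmannian you need to justify it, e.g.\ by transporting it through the specialization to the Witt-vector Grassmannian that you invoke at the end.
\end{itemize}
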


\subsubsection{Drinfeld's Lemma}

We will extend the above setup from a point on the curve $X$ to an effective divisor on $X$ of an arbitrary degree. Before doing so, let's consider a crude form of the CLLC in (\ref{crude CLLC}) below, also known as Drinfeld's Lemma.
 
Let $\mathrm{Div}^d_X$ be the moduli stack of degree $d$ effective Cartier divisors over $\mathcal Perf_k$ on $X$. We first consider the case $d=1$. If we express $\mathrm{Div}^1_X$ as $(\mathrm{Spa} \breve F)^\diamond /\dot{\mathrm{Fr}}^{\mathbb Z}$ where $\mathrm{Spa} \breve F$ is the analytic adic-spectrum and $\diamond$ is the diamond functor (with details given in \cite{Scholze-Weinstein}), then we have a canonical map
$$\mathrm{Div}^1_X = [(\mathrm{Spa} \breve F)^\diamond/\dot{\mathrm{Fr}}^{\mathbb Z}] \cong [(\mathrm{Spa} \widehat{\overline{F}})^\diamond/\underline{W_F}]\to [*/\underline{W_F}],$$
which induces an equivalence of categories \cite[Prop's IV.7.3 and VI.9.2]{FS-main},
\begin{equation}
\label{crude CLLC}
\mathcal  Rep_{\mathrm{fin\text{-}proj}}(W_F) \to \mathcal{LS}(\mathrm{Div}^1_X) ,
\end{equation}
where the LHS is the derived category of continuous representations of $W_F$ on finite projective $\Lambda$-modules, and the RHS is the derived category of $\Lambda$-local systems on $\mathrm{Div}^1_X$.

Attentive readers may notice that (\ref{crude CLLC}) is closely related to the classical Artin reciprocity. This follows directly from the line-bundle map 
$$\mathrm{Div}^1_X \xrightarrow{D \mapsto \mathcal O_X(D)} \mathrm{Pic}_X = 
\mathrm{Bun}_{\mathrm{GL}_1} = \bigsqcup_{\mathbb Z}[*/\underline{F^\times}], $$ 
where the equality on the far RHS is given by (\ref{Bun-GL1}).

\subsubsection{(Slightly generalized) geometric Satake}
\label{section Slightly generalized Geometric Satake}

We now explain a relative version of the geometric Satake equivalence over $\mathrm{Div}^d_X$. Fix a closed Cartier
divisor $D: \mathcal Perf_{/k}\to \mathrm{Div}_X^d$  of degree $d$ on $X$, and let $\mathcal I$ be its invertible ideal sheaf. We put
\begin{equation}
   \begin{split}
     B^+_{\mathrm{Div}^d_X }(S) &=\text{the global sections of the completion of }\mathcal O_{X_S}\text{ along }\mathcal I\text{, and }
     \\
     B_{\mathrm{Div}^d_X }(S) &= B^+_{\mathrm{Div}^d_X }(S)[\mathcal I^{-1}].
   \end{split}
\end{equation}
We therefore (re)define the loop and jet groups  $L_{\mathrm{Div}^d_
X }G$ and $L_{\mathrm{Div}^d_
X }^+G$ by replacing $(B_{\mathrm{dR}}(S),B_{\mathrm{dR}}^+(S))$ in (\ref{loop and jet groups, singleton divisor}) with  $(  B_{\mathrm{Div}^d_X }(S),  B^+_{\mathrm{Div}^d_X }(S))$ (the latter reduces to the former with $d=1$), and also 
\begin{equation*}
   \begin{split}
     \mathcal Hck_{G, \mathrm{Div}^d_
X } & :=[L_{\mathrm{Div}^d_
X }^+G\backslash L_{\mathrm{Div}^d_
X }G/L_{\mathrm{Div}^d_
X }^+G ] \text{ and}
\\
   \mathcal Sat_{G, \mathrm{Div}^d_
X }  & :=   \mathcal Sat(\mathcal Hck_{G, \mathrm{Div}^d_
X }  ).
   \end{split}
\end{equation*}
Let $I$ be an index set with $|I|=d$. Using the summation morphism, which sends ordered tuples to multisets,
\begin{equation}
\label{unordering morphism on DivX}
(\mathrm{Div}^1_
X)^{I} \rightarrow \mathrm{Div}^d_
X = (\mathrm{Div}^1_
X)^{I} /\Sigma_d, 
\end{equation}
where $\Sigma_d$ is the symmetric group on $d$ elements, we define the local Hecke stack and the Satake category,
\begin{equation*}
   \begin{split}
     \mathcal Hck_G^I := \mathcal Hck_{G, \mathrm{Div}^d_
X } \times_{\mathrm{Div}^d_
X } (\mathrm{Div}^1_
X)^{I} \quad \text{and}\quad 
   \mathcal Sat_G^I  :=   \mathcal Sat(\mathcal Hck_G^I ).
   \end{split}
\end{equation*}
Proposition \ref{MVG theorem of geometric Satake} is therefore upgraded into an equivalence
\begin{equation}
\label{upgraded geometric Satake} \mathfrak S^I: \mathcal Rep_{\mathrm{fin\text{-}proj}}(({}^LG)^I)\rightarrow \mathcal Sat^I_G
\end{equation}
 of symmetric monoidal categories. We also obtain a crude form of the CLLC,
\begin{equation}
\label{crude CLLC, I-version}
\mathcal  Rep_{\mathrm{fin\text{-}proj}}((W_F)^I) \to \mathcal{LS}((\mathrm{Div}^1_X)^I),
\end{equation}
which slightly extends (\ref{crude CLLC}) to divisors of multi-degree on $X$.

\subsubsection{Hecke operators}
\label{subsection Hecke operators}

The following result is crucial to defining the functor (\ref{the general functor connecting Galois rep and Bernstein center}), which will be used to construct semisimple parameters from irreducible representations of $G(F)$ in Section \ref{subsubsection The construction of semisimple parameters}.

Define the {\bf global Hecke stack}:
$$\mathrm{Hck}^ I_G (S) := \{
(\mathcal E_1,\mathcal E_2,D,\beta):\begin{smallmatrix}
\text{$\mathcal E_1,\mathcal E_2$ are $G$-bundles over $X_S$,}
\\
\text{$D:S\to (\mathrm{Div}^1_
X)^{I}$ a closed Cartier divisor of $X_S$,}
\\
\text{$\beta:\mathcal E_1\xrightarrow{\simeq} \mathcal E_2$ on $X_S\smallsetminus D$}
   \end{smallmatrix} \},$$
 with the canonical global-to-local restriction $ \mathrm{Hck}^ I_G\rightarrow \mathcal Hck^ I_G$ defined by forgetting $D$ via the map (\ref{unordering morphism on DivX}). The geometric Satake transform $\mathfrak S^I$ (\ref{upgraded geometric Satake}) induces the following diagram:
$$\mathcal Rep_{\mathrm{fin\text{-}proj}}(({}^LG)^I)\xrightarrow{\mathfrak S^I} \mathcal Sat^I_G \hookrightarrow \mathcal D( \mathcal Hck^ I_G) \rightarrow \mathcal D( \mathrm{Hck}^ I_G), \quad V\mapsto S_V.$$
Before constructing the functor in (\ref{the general functor connecting Galois rep and Bernstein center}), let's recall an operation from \cite[Prop VII.3.1]{FS-main}. The geometric objects above are defined over $\mathcal Perf_k$. They are viewed as \emph{small v-stacks} by equipping them with the topology given by v-coverings. Given a morphism $f:X\to Y$ of small v-stacks, the pull-back $f^*$ admits a left adjoint 
$$f_{\natural}: \mathcal D(X,\Lambda)\to \mathcal D (Y,\Lambda),$$
which is sometimes called the \emph{hadal pushforward}; see \cite{Hansen-Beijing-notes} for a detailed discussion of  this functor and its significance.

Put $p_1:(\mathcal E_1,\mathcal E_2,D,\beta)\mapsto \mathcal E_1$ and $p_2:(\mathcal E_1,\mathcal E_2,D,\beta)\mapsto (\mathcal E_2,D)$.  
Via the diagram $\mathrm{Bun}_G \xleftarrow{p_1} \mathrm{Hck}^I_G \xrightarrow{p_2}  \mathrm{Bun}_G \times (\mathrm{Div}^1_
X )^I $, the functor
$$\mathcal D_{\mathrm{lis}}(\mathrm{Bun}_G)\rightarrow \mathcal D(\mathrm{Bun}_G \times (\mathrm{Div}^1
_X )^I),\quad A\mapsto p_{2\natural}(p_1^*A\otimes^{} S_V )
$$
for a fixed $V\in \mathcal Rep_{\mathrm{fin\text{-}proj}}(({}^LG)^I)$ induces, via (\ref{crude CLLC, I-version}), a Hecke operator
$$T_V: \mathcal D_{\mathrm{lis}}(\mathrm{Bun}_G) \to \mathcal D_{\mathrm{lis}}(\mathrm{Bun}_G\times [*/(W_F)^I]) = \mathcal D_{\mathrm{lis}}(\mathrm{Bun}_G)^{ B(W_F)^I}.$$
This operator is crucial to defining Hecke eigensheaves in Fargues' conjecture. Note that the functor
\begin{equation}
\label{the general functor connecting Galois rep and Bernstein center}
T:\mathcal Rep_{\mathrm{fin\text{-}proj}}(({}^LG)^I) \to \mathrm{End}( \mathcal D_{\mathrm{lis}}(\mathrm{Bun}_G)), \quad V \mapsto T_V
\end{equation}
is $\mathcal Rep_{\mathrm{fin\text{-}proj}}((W_F)^I)$-linear and symmetric monoidal, i.e., $T_{V\otimes W} = T_V\circ T_W = T_W\circ T_V  $.

\subsubsection{The spectral action}

Finally, using the functor of Hecke operators, we define the spectral action of $ \mathcal Coh^{b,\mathrm{qc}}(\mathrm{Par}_G)$ on $\mathcal D_{\mathrm{lis}}(\mathrm{Bun}_G)$, as mentioned in the main Conjecture \ref{FS-main-statement}. 

There is a $\mathcal Rep_{\mathrm{fin\text{-}proj}}((W_F)^I)$-linear symmetric monoidal functor, called the universal $({}^LG)^I$-torsor,
\begin{equation}
\label{universal LG-torsor}
\mathcal Rep_{\mathrm{fin\text{-}proj}}(({}^LG)^I) \to \mathcal Perf(\mathrm{Par}_G),\quad V\mapsto (\mathcal O_{Z^1(W_F,\hat G)}\otimes_\Lambda V)/\hat G(\Lambda).
\end{equation}
\cite[Th X.1.1]{FS-main} implies that the spectral action is determined by requiring the following composition
$$\mathcal Rep_{\mathrm{fin\text{-}proj}}(({}^LG)^I) \xrightarrow{\text{(\ref{universal LG-torsor})}} \mathcal Perf(\mathrm{Par}_G)^{(W_F)^I}\dashrightarrow\mathrm{End}( \mathcal D_{\mathrm{lis}}(\mathrm{Bun}_G)^\omega)^{ B(W_F)^I}$$
to be equal to (\ref{the general functor connecting Galois rep and Bernstein center}). We describe the action as 
$$ \mathcal Perf(\mathrm{Par}_G)^{(W_F)^I}\to\mathrm{End}( \mathcal D_{\mathrm{lis}}(\mathrm{Bun}_G)^\omega), \quad \mathcal F\mapsto (A\mapsto \mathcal F * A).$$
By taking limit on (\ref{universal LG-torsor}) to $\mathrm{Ind} \mathcal Perf(\mathrm{Par}_G)^{(W_F)^I} = \mathcal {QC}oh(\mathrm{Par}_G)$, we see that the spectral action is represented by a universal object in $\mathcal D_{\mathrm{lis}}(\mathrm{Bun}_G)$ corresponding to $\mathcal O_{Z^1(W_F,\hat G)}/\hat G(\Lambda) = \mathcal O_{\mathrm{Par}_G}$, which is conjecturally the Whittaker sheaf $\mathcal W_{\mathfrak f}$ \cite[Conj X.1.4]{FS-main}.

\subsection{The excursion algebra}

Following the recipe in \cite[Sec IX.4]{FS-main}, we construct, for each (Schur-)irreducible object $A\in \mathcal D_{\mathrm{lis}}(\mathrm{Bun}_G)$, a unique
semisimple L-parameter using the  excursion operators defined by V. Lafforgue. To explain this construction, we first recall the three related Bernstein centers.

\subsubsection{The representation-theoretic Bernstein center}

For any open pro-$p$ subgroup $K$ of $G(F)$, define $\mathcal H(G(F),K) := \mathrm{End}_{G(F)}(\mathrm{cInd}^{G(F)}
_K \mathbf 1_K)$, the classical Hecke algebra of level $K$. Denote its center by $\mathcal Z(G(F),K)$. We then define the (representation-theoretic) Bernstein center of $G(F)$ by
$$\mathcal Z( G(F)) = \lim_{
   \begin{smallmatrix} 
   \longleftarrow
   \\
   K\subset G(F)
   \\
   \text{open pro-$p$}
      \end{smallmatrix}
}\mathcal Z(G(F),K).$$
This is well known to be isomorphic to the categorical center, i.e., the algebra of endo-transforms of the identity functor, of the category of smooth $\Lambda$-representations of $G(F)$.

\subsubsection{The geometric Bernstein center}

This is the categorical center of $\mathcal D_{\mathrm{lis}}(\mathrm{Bun}_G)$, i.e., 
$$\mathcal Z^{\mathrm{geom}}( G) :=\pi_0 \mathrm{End}(\mathrm{id}_{\mathcal D_{\mathrm{lis}}(\mathrm{Bun}_G)}).$$
The fully faithful functor $i_{1!}:\mathcal D(G(F)) \hookrightarrow \mathcal D_{\mathrm{lis}}(\mathrm{Bun}_G)$ induces a morphism of $\Lambda$-algebras,
$$\mathcal Z^{\mathrm{geom}}( G)\to \mathcal Z( G(F)).$$

\subsubsection{The spectral Bernstein center}

This is the ring of globally defined functions on $|\mathrm{Par}_G| = Z^1(W_F,\hat G)//\hat G$, i.e., 
$$\mathcal Z^{\mathrm{spec}}(G) = \mathcal O (Z^1(W_F,\hat G))^{\hat G}.$$
Following \cite[Th VIII.3.2]{FS-main}, we recall the algebra of excursion operators, which reduces the ring $\mathcal O (Z^1(W_F,\hat G))^{\hat G}$ to a limit of similar rings, but defined by data of finite nature.

Fix an open subgroup $P$ of $P_F$ and a subgroup $W = W(P)\subseteq W_F$ such that $W/P$ is discrete and dense in $W_F/P$. Define the algebra of {\bf excursion operators} by
$$\mathrm{Exc}(W, \hat G) :=
{\begin{matrix} 
\,\
\\
 \mathrm{colim}
 \\
   \begin{smallmatrix} 
 \\
 n,\, F_n
 \\
    F_n \to W
        \end{smallmatrix}
        \end{matrix}}
\mathcal O(Z^1(F_n, \hat G))^{\hat G}$$
where the colimit runs over all morphisms from a free group $F_n$ on $n$ generators, for all $n$, into $W$. (It may be viewed as a discretization of $\mathrm{Par}_G$ in some sense.) If $\ell$ is a good prime, then there is an isomorphism \cite[Th VIII.3.6]{FS-main}:
\begin{align*} 
\mathrm{Exc}(W, \hat G) 
\xrightarrow{\cong }\mathcal O(Z^1(W/P, \hat G))^{\hat G} \cong  \mathcal O(Z^1(W_F/P, \hat G))^{\hat G}. 
\end{align*}
(The last isomorphism requires that $\ell\in \Lambda^{\times}$, which is automatic in our case.) This induces  a canonical map 
\begin{equation}
\label{Excursion-W to Z-spec}
\mathrm{Exc}(W, \hat G)\to \mathcal Z^{\mathrm{spec}}(G).
\end{equation}
Denote by $\mathcal C_P:={\mathcal {D}^P_{\mathrm{lis}}(\mathrm{Bun}_G)}^\omega$ the subcategory of objects $A\in \mathcal D_{\mathrm{lis}}(\mathrm{Bun}_G)^\omega$ for which the action of $P$ on $T_V(A)$ is trivial for all $V\in \mathcal Rep({}^LG)$. Analogously to (\ref{Excursion-W to Z-spec}), there is a morphism \cite[Th VIII.4.1]{FS-main}:
\begin{equation}
\label{Excursion-W to Z-geom-P}
\mathrm{Exc}(W, \hat G)\to \mathcal Z({\mathcal {D}^P_{\mathrm{lis}}(\mathrm{Bun}_G)}^\omega) = \pi_0 \mathrm{End}(\mathrm {id}_{\mathcal C_P} ),
\end{equation}
which will be explained in the next subsection. Using the decomposition into connected components (i.e., letting $P$ vary), we obtain a morphism relating the geometric and spectral Bernstein centers,
$$\mathcal Z^{\mathrm{spec}}( G)\to \mathcal Z^{\mathrm{geom}}( G),$$
which, by construction, factors over the subalgebra $\mathcal Z^{\mathrm{geom}}_{\mathrm{Hecke}}( G)$ of $ \mathcal Z^{\mathrm{geom}}( G)$ consisting of transformations of the identity functor that commute with all Hecke operators $T_V$, for $V\in \mathcal Rep_{\mathrm{fin\text{-}proj}}(({}^LG)^I)$.

\subsubsection{Construction of semisimple parameters}
\label{subsubsection The construction of semisimple parameters}

We now recall, following \cite[Sec VIII.4]{FS-main}, the idea of V. Lafforgue for constructing L-parameters. In a sense, this explicates the morphism (\ref{Excursion-W to Z-geom-P}).

The main idea is to build, from functions in $\mathcal O(Z^1(W/P, \hat G))^{\hat G}$, an excursion datum, which in turn gives rise to an endomorphic transformation of the identity functor $  \mathrm {id}_{\mathcal C_P}$. To begin with, an {\bf excursion datum} is a quintuple 
$$\mathfrak d = (I, (w_i)_{i\in I}, V, \alpha, \beta),$$ where $I$ is a finite set, each $w_i$ belongs to $W$, $V$ is a finite projective representation of $({}^LG)^I$, and $\alpha: \mathbf 1_{\hat G} \to V|_{\hat G}$ and 
$\beta: V|_{\hat G}\to  \mathbf 1_{\hat G}$ are $\hat G$-linear morphisms. To a given function $f\in \mathcal O(Z^1(F_n, \hat G))^{\hat G}
$, we put $I=\{1,\dots,n\}$ and view $ \mathcal O(Z^1(F_n, \hat G))^{\hat G}$ as $ \mathcal O(({}^LG)^I/\Delta{\hat G})$, where $\Delta{\hat G}$ denoted the diagonal action of ${\hat G}$ on $({}^LG)^I$ by conjugations. Let $V\in \mathcal Rep(({}^LG)^I)$ be the subspace in $\mathcal O(({}^LG)^I/\Delta{\hat G})$ generated by $f$, and define $\alpha: 1\mapsto f$ and $\beta: f\mapsto f(1_{({}^LG)^I})$. For each $(w_i)\in W^I$, form an excursion datum $\mathfrak d = (I, (w_i)_{i\in I}, V, \alpha, \beta)$ as constructed from $f$. The functor of Hecke operators (\ref{the general functor connecting Galois rep and Bernstein center}) then defines, for each object $\pi $ in ${\mathcal C_P}$, 
an operator 
\begin{equation}
\label{explicit operator from excursion data}
{\mathfrak d}(\pi) :\pi\xrightarrow{T_\alpha(\pi)} T_V(\pi)\xrightarrow{(w_i)_{i\in I}} T_V(\pi)\xrightarrow{T_\beta(\pi)} \pi.
\end{equation}
It is routine to check various compatibility conditions, which result in the morphism (\ref{Excursion-W to Z-geom-P}), $f\mapsto \mathfrak d$.

We finally arrive at \cite[Cor VIII.4.3 or Def/Prop IX.4.1]{FS-main}: given 
$$\pi\in \mathcal Rep(G_b(F))^{\mathrm{sim}}_{/\mathrm{iso}} \hookrightarrow \mathcal Rep(G_b(F))\rightarrow \mathcal {D}_{\mathrm{lis}}(G_b(F)) =\mathcal {D}_{\mathrm{lis}}(\mathrm{Bun}^b_G) \ni \mathcal F_\pi ,$$ 
the action of $\mathcal Z^{\mathrm{geom}}(G)$ on $\mathcal F_\pi$ yields a character of $\pi_0 (\mathrm{End}(\mathrm{id}_{\mathcal C_P} ))$ by the (Schur-)irreducibility of $\pi$. The induced character of $\mathrm{Exc}(W_F,\hat G) $ by (\ref{Excursion-W to Z-geom-P}) then determines a point of $\mathrm{par}_G$, i.e., a semisimple character $\varphi_\pi^{\mathrm{ss}}\in (Z^1(F_n, \hat G)\sslash \hat G )(\Lambda)$ associated to $\pi$. More explicitly, given any excursion datum $\mathfrak d$, define for  $\varphi\in |\mathrm{Par}_G|(\Lambda)$ the scalar $\varphi(\mathfrak d)\in \Lambda$ by 
\begin{align*} 
\varphi(\mathfrak d)
&= (\Lambda \xrightarrow{\alpha} V \xrightarrow{(\varphi(w_i))_{i\in I}} V \xrightarrow{\beta} \Lambda ),
\end{align*}
then we have $\pi\mapsto \varphi$ if and only if $\varphi(\mathfrak d) = {\mathfrak d}(\pi)  $ as defined in (\ref{explicit operator from excursion data}), which is a scalar by the irreducibility of $\pi$, for all $\mathfrak d$.

\subsection{Fargues' conjecture}
\label{subsection Fargues conjecture}

We begin with some definitions. We call a parameter $\varphi:W_F\rightarrow \hat G$ {\bf elliptic} if $S_\varphi/(Z\hat G)^{W_F}$ is finite, and {\bf cuspidal} furthermore its restriction $\varphi|_{I_F}$ to the inertial subgroup of $W_F$ has a finite image, i.e., there is no non-trivial monodromy.

We can now state Fargues' conjectures. 
\begin{conj}
\label{Fargues-conjecture-detailed}
\cite[Conj 4.4]{Fargues-overview}. Given an elliptic parameter $\varphi$, there exists a complex of sheaves $\mathcal F_\varphi\in \mathcal D_{\mathrm{lis}}(\mathrm{Bun}_G)$, equipped with an action of $S_\varphi$, satisfying the following properties.

\begin{enumerate}[(i)]
\item (Hecke eigensheaf) Let $r_\mu$ be the ${}^LG$-representation appearing in Proposition \ref{MVG theorem of geometric Satake}, then 
$$T_{V_\mu} * \mathcal F_\varphi = \mathcal F_\varphi \boxtimes (r_\mu\circ \varphi) \in \mathcal D_{\mathrm{lis}}(\mathrm{Bun}_G )^{ B(W_F)^I}.$$  
In other words, 
$\mathcal F_\varphi$ is an eigensheaf of $T_{V_\mu} $ with eigenvalue $r_\mu\circ\varphi$.

\item (Cuspidality) Put $i^{\mathrm{ss}}:\mathrm{Bun}^{\mathrm{ss}}_G := \bigsqcup_{b\in B(G)_{\mathrm{bas}}} \mathrm{Bun}^b_G \hookrightarrow \mathrm{Bun}_G $.  If $\varphi$ is cuspidal, then $i^{\mathrm{ss}}_{\natural}i^{\mathrm{ss},*} \mathcal F_\varphi = \mathcal F_\varphi $.

\item (L-packet) For $b\in B(G)_{\mathrm{bas}}$, we have
$$i^{b,*}\mathcal F_\varphi \cong \bigoplus_{} (\dim \rho) \pi_{\varphi,\rho} \in \mathcal Rep( G_b(F)),$$
where $\rho$ ranges over irreducible representations of $S_\varphi$ such that $\rho|_{Z\hat G^{\Gamma_F}}$ corresponds to $b\in \pi_1(G)_{\Gamma_F}$ (via $\kappa$ and (\ref{Kottwitz duality})), and $\pi_{\varphi,\rho}$ is the representation via the conjectural LLC for $G_b$.

 \item (Conjectural construction) Put $x_\varphi: *  \rightarrow \mathrm{Par}_G$ as the point corresponding to $\varphi$, and denote 
$\mathcal E_\varphi  = x_{\varphi, *}\underline{\Lambda} \in \mathcal{QC}oh(\mathrm{Par}_G) = \mathrm{Ind}\mathcal{P}erf^{\mathrm{qc}}(\mathrm{Par}_G)$, then 
$$\mathcal F_\varphi = \mathcal E_\varphi * \mathcal W_{\mathfrak f}.$$
Here $\underline{\Lambda}$ is the skyscraper sheaf supported at $x_\varphi$.  \qed
\end{enumerate}
\end{conj}
The last two parts propose a very natural construction of $\mathcal F_\varphi$ and its associated L-packets, which is, in a sense, the converse of the main statement of the CLLC. Since the image of $x_\varphi$ is $[*/S_\varphi] \hookrightarrow \mathrm{Par}_G$, the pushforward $x_{\varphi,*}\bar{\mathbb Q}_\ell $ is indeed the induction of representations from the trivial group to $S_\varphi$, i.e., 
\begin{equation}
\label{induced regular representation of S_varphi}
\mathcal E_{\varphi } = \mathrm{Ind}_1^{S_\varphi}\Lambda \cong \bigoplus_{\rho \in \mathcal Rep(S_\varphi)^{\mathrm{sim}}_{/\mathrm{iso}}} \rho^\vee \otimes \rho.
\end{equation}
Regarding each $\rho$ as a skyscraper sheaf supported at the point $x_\varphi \in \mathrm{Par}_G$, we  further conjecture that 
\begin{equation}
\label{Fargues-conjecture-individual}
i^{b,*}(\rho * \mathcal W_{\mathfrak f}) = \pi_{\varphi,\rho}.
\end{equation}
See also \cite[Conj 2.1.8]{Hansen-Beijing-notes} and \cite[Conj X.2.2]{FS-main} for some alternative formulations.

One may wonder whether an analogue of (\ref{Fargues-conjecture-individual}) also holds for non-elliptic parameters (if we use $x_{\varphi ,!}\Lambda$ instead of $\mathcal E_\varphi  = x_{\varphi, *}\Lambda$). In one manageable case, if $\varphi$ satisfies a certain regularity condition known as being \emph{generous} \cite[Def 2.1.5 and Conj 2.1.9]{Hansen-Beijing-notes} (similar to the notion of a generic representation of $G(F)$, as the terminology suggests), then we can construct a Hecke eigensheaf $\mathcal F_\varphi$ similar to Conjecture \ref{Fargues-conjecture-detailed}(iv). In the general non-elliptic case when $x_\varphi$ is not a closed point, the (micro-)local geometry around it may be very complicated; for instance, it may exhibit non-trivial fibres of the cotangent complex of $\mathrm{Par}_G$.

\subsection{Parabolic induction}

We explain that Hecke operators are compatible with parabolic induction. Indeed, we explain this compatibility in terms of the excursion map, as follows.

Let $M$ be a Levi subgroup of $G$ contained in a parabolic $P$. We view $P$ as a dynamical parabolic and choose $\mu\in X_*(T)$ associated with $P$, i.e., 
$$P = \{g\in G: \lim_{t\to 0}\mathrm{Ad}(\mu(t)) g\text{ exists}\}.$$
Put $b = \mu(\varpi)\in B(G)$, so that $G_b = M$. We define a transfer of the spectral centers $\mathcal Z^{\mathrm{spec}}( G)\to \mathcal Z^{\mathrm{spec}}( M)$ using the map $Z^1(M,W)\to Z^1(G,W)$ defined by $\varphi \mapsto \varphi_M$, where 
\begin{equation}
\label{descent of Langlands parameter to Gb}
\varphi_M:w\mapsto (2 \rho_{\hat G} - 2 \rho_{\hat G_b})(\sqrt{q})^{|w|}\varphi(w).
\end{equation}
Let $\pi$ be a representation of $M(F)$, which may be assumed to be compactly generated as a complex of representations. Consider 
\begin{equation}
\label{representation corresponds to sheaves, for different isocrystals}
\pi \in \mathcal Rep (G^b(F)) \xrightarrow{\sim} \mathcal D(\mathrm{Bun}_G^b) \ni \mathcal F = \mathcal F_\pi ,
\end{equation}
i.e., $\mathcal F$ corresponds to $\pi$ under the above equivalence of categories \cite[Prop VII.7.1]{FS-main}, push it forward to $D(\mathrm{Bun}_G)$, and continue to denote it by $\mathcal F$. Then the composition
\begin{equation}
\label{composition of excursion via Levi subgroup}
\mathcal Z^{\mathrm{spec}}( G)\to \mathcal Z^{\mathrm{spec}}( M) \xrightarrow{\text{(\ref{Excursion-W to Z-geom-P}) to $\pi$}} \mathrm{End}(\pi) \to \mathrm{End}(\mathrm{Ind}_{P(F)}^{G(F)}\pi),
\end{equation}
is equal to the excursion map (\ref{Excursion-W to Z-geom-P}) applied to $\mathrm{Ind}_{P(F)}^{G(F)}\pi$ \cite[Cor IX.7.3]{FS-main}. Here we use the unnormalized parabolic induction of $\pi$, which corresponds to $T_V(\mathcal F)|_{\mathrm{Bun}^1_G}$ under (\ref{representation corresponds to sheaves, for different isocrystals}) with $b=1$, and $V$ taken to be the highest weight representation of weight $\mu$.

Finally, we remark that the transfer of parameters (\ref{descent of Langlands parameter to Gb}) can be defined for general $b\in B(G)$. This is explained in the proof of \cite[Th IX.7.2]{FS-main}, which computes explicitly the compatibility formula of the excursion map (\ref{composition of excursion via Levi subgroup}) with the transfer from $G$ to $G_b$. We omit the technical details.

\subsubsection{Example: $\mathrm{GL}_2$}
\label{subsubsection Example: GL2}

We have the following exact sequence of representations of $G(F)$ with $G = \mathrm{GL}_2$ \cite[(9.10.4)]{BH-GL2} \begin{align}
0\to  \mathrm{St}_G \to  \mathrm{Ind}_B^G \delta_B^{-1} \to \mathbf 1_G \to 0 .
\label{trivial rep sequence in PGL2}
\end{align}
The parameters corresponding to $\mathbf 1_G$ and $\mathrm{St}_G$ are respectively
\begin{align}
&\varphi_0: \dot{\mathrm{Fr}} \mapsto \mathrm{diag}(q^{1/2},q^{-1/2}), \quad t\mapsto \mathrm{diag}(0,0),\text{ and}
\\
&\varphi_1: \dot{\mathrm{Fr}} \mapsto \mathrm{diag}(q^{1/2}, q^{-1/2}), \quad t\mapsto   \begin{bmatrix} 
      1 & 1 \\
       & 1 \\
   \end{bmatrix}.
\end{align}
Let's examine the geometry around $ x_{\varphi_0}$ and $x_{\varphi_1}$. The points $ x_{\varphi_0}$ and $x_{\varphi_1}$ have the same image in $|\mathrm{Par}_G|$. Here we have non-trivial monodromy, which renders the cotangent bundle at $x = | x_{\varphi_0}|$ non-trivial, i.e., we are out of the generous case as discussed at the end of Section \ref{subsection Fargues conjecture}. Moreover, the point $x_{\varphi_0}$ is closed, whereas $x_{\varphi_1}$ is open and $\{x_{\varphi_1}\}^-=C_x:=\{ x_{\varphi_0} ,x_{\varphi_1}\}\subset \mathrm{Par}_G$. For $i\in \{0,1\}$, denote by $j_i: x_{\varphi_i}\to C_x$ the inclusion, and let $\mathbf 1_i$ be the trivial local system supported at the point $x_{\varphi_i}$. Consider the following sequence in $\mathcal Perf(\mathrm{Par}_G)$,
\begin{equation}
\label{Steinberg sequence in D(ParG)}
j_{1*}\mathbf 1_1 \to j_{0!}\mathbf 1_0 \to j_{1!}\mathbf 1_1 [1],
\end{equation}
which is rotated from the more natural one:
$$ j_{1!}\mathbf 1_1 \to j_{1*}\mathbf 1_1  = \underline{\Lambda}_{C_x}\to j_{0!}\mathbf 1_0 \to. $$
The calculation from \cite[Th 4.43]{Hellmann-IwahoriHecke} suggests that applying the spectral action of (\ref{Steinberg sequence in D(ParG)}) to $\mathcal W_{\mathfrak f}$ should yield (\ref{trivial rep sequence in PGL2}).

\section{Arthur packets and sheaves}

Recall that $Z^1(W_F , \hat G(\Lambda)):=Z^1(W_F , \hat G)(\Lambda)$ is the set of continuous 1-cocycles $\lambda: W_F\to \hat G(\Lambda)$. We call $\lambda$ an {\bf infinitesimal parameter} if it is smooth, i.e., locally constant, and denote by $\Lambda(G/F)$ the set of equivalence classes of infinitesimal parameters of $G$ under $\hat G(\Lambda)$-conjugacy.

\subsection{Arthur parameters}

When $\Lambda=\mathbb C$, by the so-called `no small subgroups' property, any $\lambda\in Z^1(W_F , \hat G(\Lambda))$ is automatically smooth. We put $\mathrm{SL}_2^D:=\mathrm{SL}_2(
\Lambda)$ and define a {Langlands parameter} as a morphism $W_F\times \mathrm{SL}_2^D\to {}^L G$ such that $\varphi|_{W_F} $ is smooth and the restriction $\varphi :{\mathrm{SL}^D_2 } \to \hat G(\Lambda)$ is algebraic over $\Lambda$. Denote by $\Phi(G/F)$ the set of equivalence classes of Langlands parameters of $G$ under $\hat G(\Lambda)$-conjugacy.

Let $\mathrm{SL}_2^A:=\mathrm{SL}_2^D$ denote another copy of $\mathrm{SL}_2(
\Lambda)$ but playing a different role. 
\begin{dfn}
An {\bf Arthur parameter} for $G$ is a group morphism $\psi : W_F\times \mathrm{SL}^D_2 \times \mathrm{SL}^A_2 \rightarrow {}^LG$ such that $\psi|_{W_F\times \mathrm{SL}^D_2} $ is a Langlands parameter, $\psi |_{\mathrm{SL}^A_2 }$ is algebraic over $\Lambda$, and $\psi({W_F})$ is bounded in $\hat  G(\Lambda)$ (i.e., its closure is compact). Denote by $\Psi(G/F)$ the set of equivalence classes of Langlands parameters of $G$ under $\hat G(\Lambda)$-conjugacy.
\qed\end{dfn}
We therefore have a composition of `forgetful' maps: 
\begin{equation}
\label{forgetful maps on parameters}
   \begin{split}
     &\Psi(G/F)\rightarrow \Phi(G/F) \rightarrow \Lambda(G/F), \quad 
     \\
 &\lambda_\psi(w):=\lambda_{\phi_\psi}(w) = \phi_\psi(w, d_w)  = \psi(w, d_w, d_{(w, d_w)}),
   \end{split}
\end{equation}
where $d_w = \mathrm{diag}(|w|^{1/2}, |w|^{-1/2}) \in \mathrm{SL}_2^D$ and $d_{(w,a)} = d_w \in \mathrm{SL}_2^A$. The map $\Phi(G/F) \rightarrow \Lambda(G/F)$ is clearly surjective, and \cite[Lem 3.3]{Cunningham-Voganish-begins} shows that $\Psi(G/F)\rightarrow \Phi(G/F) $ is injective. We say that $\varphi$ is {\bf of Arthur type} $\psi$ if $\psi\mapsto \varphi$ under the above map.

When $G$ is an orthogonal or a symplectic group, an important theorem \cite[Th 2.2.1]{Arthur-book} asserts that there is a finite multiset $\tilde\Pi_\psi$ over the set $\tilde \Pi_{\mathrm{unit}}(G(F))$ of irreducible unitary representations of $G(F)$, equipped with a
canonical mapping
$$\Psi_\psi: \tilde\Pi_\psi\rightarrow \hat{\mathcal S}_\psi, \quad \pi \mapsto \left<\cdot,\pi\right>,$$
and satisfying the following properties: 

\begin{enumerate}[(i)]
\item $\tilde\Pi_\psi$ is constructed from $\psi$ using endoscopic transfer (i.e., representations in $\tilde\Pi_\psi$ and the map $\Psi_\psi$ satisfy an endoscopic character identity);

\item $\tilde\Pi_\psi$  contains the L-packet $\tilde\Pi_{\phi_\psi}$; 

\item $\tilde\Pi_\psi$ is a set (multiplicity one) if $G$ is quasi-split. 

\end{enumerate}
The multiset $\tilde\Pi_\psi$ is commonly known as an Arthur packet. In \cite{Cunningham-Voganish-begins}, they show by examples that certain perverse sheaves supported on the fibre in $\mathrm{Par}_G$ of $\lambda$ satisfy linear relations analogous to the endoscopic character identity relations on representations in $\tilde\Pi_\psi$, thereby establishing a sheaf-representation correspondence. We will review how these perverse sheaves are constructed in Section \ref{subsection Conjectural Arthur packet in CLLC}.

When $\Lambda=\overline{\mathbb Q}_\ell$, the discussion is somewhat different. A continuous 1-cocycle $\lambda: W_F\to \hat G(\Lambda)$ is not necessarily smooth, but we have the following result from Grothendieck's quasi-unipotence theorem \cite[Prop VIII.2.5]{FS-main}. Knowing that $I_F/P_F \cong \prod_{m\neq p}\mathbb Z_{m} $, we choose a lifting $t: \mathbb Z_\ell \rightarrow I_F$, and embed $\hat G$ into some $\mathrm{GL}_m$ (all choices will eventually become irrelevant), then there exists a nilpotent $N\in \hat {\mathfrak g}(\Lambda) $ satisfying 
\begin{align} 
& \mathrm{Ad}(\lambda(w)) N = |w| N, \quad w\in W_F,
\label{Action of Frobenius on the Deligne part of N}
\\
\text{ and }\quad &\lambda(t(x)) = \exp(x N), \quad x\in \ell^k \mathbb Z_\ell\text{ for }k\gg 0.
\label{Grothendieck quasi-unipotence element}
\end{align}
The parameter defined by 
$$\dot{\mathrm{ Fr} }^m \tau \mapsto \lambda(\dot{\mathrm{ Fr} }^m \tau ) \exp(-\tau_\ell N), \quad \tau\in I_F,\,m\in \mathbb Z,$$    
for a choice of Frobenius $\dot{\mathrm{ Fr} }$ (again, eventually irrelevant) and with $\tau_\ell $ being the projection of $\tau$ via $I_F/P_F \twoheadrightarrow \mathbb Z_{\ell} $, is then smooth.

To unify the discussion, for any algebraically closed field $\Lambda$ of characteristic 0, we define a {\bf Langlands parameter} over $\Lambda $ to be a pair $\varphi = (\lambda, N)$, where $\lambda$ is an infinitesimal parameter and $N\in \hat {\mathfrak g}(\Lambda) $ satisfying (\ref{Action of Frobenius on the Deligne part of N}) and 
(\ref{Grothendieck quasi-unipotence element}). Denote by $\Phi(G/F)$ the set of equivalence classes of Langlands parameters under $\hat G(\Lambda)$-conjugacy.
\begin{prop}
\begin{enumerate}[(i)]
\item $\Phi(G/F)$ coincides with the previous constructions when $\Lambda = \mathbb C$ or $\overline{\mathbb Q}_\ell$.

\item (When $\Lambda$ is of characteristic 0,) $N$ is necessarily nilpotent.

\end{enumerate}

\end{prop}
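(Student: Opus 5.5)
For part (ii) I would argue with eigenvalues. Fix $w\in W_F$ with $|w|\neq 1$, for instance a lift of Frobenius, so that $|w| = q^{\pm 1}$. Relation (\ref{Action of Frobenius on the Deligne part of N}) says that $\mathrm{Ad}(\lambda(w))N = |w|N$. Embed $\hat G\hookrightarrow \mathrm{GL}_m$ as in the text and put $g=\lambda(w)$. Then $gNg^{-1} = q^{\pm1}N$, hence $gN^kg^{-1} = q^{\pm k}N^k$, and therefore $\mathrm{tr}(N^k) = q^{\pm k}\mathrm{tr}(N^k)$ for all $k\geq 1$. Since $\Lambda$ has characteristic $0$, $q^{\pm k}\neq 1$, so $\mathrm{tr}(N^k)=0$ for every $k$. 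By Newton's identities (again using characteristic $0$ to divide by $k$), all coefficients of the characteristic polynomial of $N$ vanish except the leading one. Thus $N$ is nilpotent in $\mathfrak{gl}_m$. An element of $\hat{\mathfrak g}$ is nilpotent in $\hat{\mathfrak g}$ exactly when its image under a faithful representation is nilpotent, so the conclusion is independent of the embedding. Equivalently, one can note that the eigenvalues of $N$ form a set stable under multiplication by $q$, and a finite set with this property in characteristic $0$ must be $\{0\}$.

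For part (i) I would construct mutually inverse bijections compatible with $\hat G(\Lambda)$-conjugacy.

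\textbf{Case $\Lambda=\mathbb C$.} Start from a Langlands parameter $\varphi: W_F\times \mathrm{SL}_2^D\to {}^LG$ in the old sense, and send it to $\lambda(w)=\varphi(w,d_w)$ and $N = d\varphi|_{\mathrm{SL}_2^D}\left(\begin{smallmatrix}0&1\\0&0\end{smallmatrix}\right)$. Since $\mathrm{Ad}(d_w)e = |w|e$ and $\varphi(w,1)$ commutes with $\varphi(\mathrm{SL}_2^D)$, relation (\ref{Action of Frobenius on the Deligne part of N}) holds. Relation (\ref{Grothendieck quasi-unipotence element}) should be read with $\lambda$ itself smooth; the exponential relation only applies on a finite-index open subgroup where the smooth part is trivial, and I would make this reading explicit. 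For the inverse direction, the Jacobson--Morozov theorem applied to $N$ produces an $\mathfrak{sl}_2$-triple. Using the standard Kostant/Gross argument, one chooses the triple so that it is normalized by the semisimple part of $\lambda(\dot{\mathrm{Fr}})$ compatibly with $d_w$, and this gives a morphism $\varphi$ on $W_F\times\mathrm{SL}_2^D$. Uniqueness up to $Z_{\hat G}(\lambda,N)$-conjugacy, via Kostant's theorem, then shows that the construction descends to conjugacy classes.

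\textbf{Case $\Lambda=\overline{\mathbb Q}_\ell$.} Here I would use Grothendieck's theorem exactly as recalled above. A continuous cocycle $\lambda$ yields $N$ and the smooth part $\lambda(\dot{\mathrm{Fr}}^m\tau)\exp(-\tau_\ell N)$. Conversely, from $(\lambda,N)$ one rebuilds the cocycle by $w\mapsto \lambda(w)\exp(t_\ell(w)N)$. Relation (\ref{Action of Frobenius on the Deligne part of N}) is exactly what makes this a cocycle. Independence of the choices of $t$, $\dot{\mathrm{Fr}}$ and the embedding follows as in \cite[Prop VIII.2.5]{FS-main}, by a conjugation argument.

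The main obstacle is the $\mathbb C$-case inverse: the $\mathfrak{sl}_2$-triple has to be chosen equivariantly for $\lambda(W_F)$, and one must check that the conjugacy classes match bijectively. I would handle this by working inside the reductive group $Z_{\hat G}(\lambda(I_F))$, using Kostant's conjugacy of triples there.
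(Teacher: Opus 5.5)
The paper gives no argument of its own here. It cites Imai: Lem.~1.6 for (ii) and Props.~1.7, 1.13, 1.17 for (i). Your proposal reconstructs the arguments behind those references. Your (ii) is correct; if $G$ is not split, take $w$ to be a power of $\dot{\mathrm{Fr}}$ acting trivially on $\hat G$, so that $\mathrm{Ad}(\lambda(w))$ really is conjugation inside $\mathrm{GL}_m$. The $\overline{\mathbb Q}_\ell$ half of (i) via Grothendieck's theorem is also fine, and it matches \emph{all} pairs $(\lambda,N)$ with continuous cocycles.

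The $\mathbb C$ half has a genuine gap in the inverse map. For $\varphi(\dot{\mathrm{Fr}},1):=\lambda(\dot{\mathrm{Fr}})\varphi(1,d_{\dot{\mathrm{Fr}}})^{-1}$ to commute with $\varphi(\mathrm{SL}_2^D)$, the whole of $\lambda(\dot{\mathrm{Fr}})$, not only its semisimple part, must send the triple $(N,H,F)$ to $(|\dot{\mathrm{Fr}}|N,H,|\dot{\mathrm{Fr}}|^{-1}F)$. In particular the unipotent part $u$ must fix $H$. This $u$ commutes with $N$, but it need not fix any triple through $N$: a nontrivial element of the unipotent radical $U_N$ of $Z_{\hat G}(N)$ acts freely on Kostant's $U_N$-torsor of such triples.

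Concretely, take $\hat G=\mathrm{GL}_3$, $N=E_{12}$, and $\lambda$ unramified with $\lambda(\dot{\mathrm{Fr}})=s(1+E_{32})$, where $s$ is diagonal with $s_2=s_3$ and $s_1/s_2=|\dot{\mathrm{Fr}}|$. Relation (\ref{Action of Frobenius on the Deligne part of N}) holds. But any $\varphi$ with $d\varphi(e)=E_{12}$ makes $\Lambda^3\cong\mathrm{Sym}^1\oplus\mathrm{Sym}^0$ as an $\mathrm{SL}_2^D$-module. Then $\varphi(W_F\times 1)$ acts by scalars on the two isotypic pieces, so $\lambda_\varphi(\dot{\mathrm{Fr}})$ is semisimple, and this pair is not in the image. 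With the paper's literal definitions the $\mathbb C$ statement is therefore false.

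What is missing is the Frobenius-semisimplicity hypothesis built into Imai's Prop.~1.7. You need $\lambda(\dot{\mathrm{Fr}})$ semisimple on one side and $\varphi(\dot{\mathrm{Fr}},1)$ semisimple on the other. These conditions correspond, since $\varphi(1,d_w)$ is semisimple and commutes with $\varphi(w,1)$.

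Under this hypothesis your Kostant step works:
\begin{enumerate}
\item The Zariski closure of $\lambda(W_F)$ is generated by the finite group $\lambda(I_F)$ and a group of multiplicative type, so it is linearly reductive.
\item It acts affinely on the affine space $\{H:[H,N]=2N,\ H\in[N,\hat{\mathfrak g}]\}$ of neutral elements, so it has a fixed point. This gives existence.
\item Two fixed $H$ differ by a unique $v\in U_N$, and uniqueness forces $v$ to commute with $\lambda(W_F)$. This gives uniqueness of $\varphi$ up to $Z_{\hat G}(\lambda,N)$.
\end{enumerate}
So (i) must be read as comparing $\mathrm{SL}_2$-parameters over $\mathbb C$ with Frobenius-semisimple pairs only, in contrast with the $\overline{\mathbb Q}_\ell$ case.
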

\proof 
Both facts can be found in \cite{Imai-Langlands-parameters}. (i) follows directly from \emph{loc. cit.}, Prop 1.7 and 1.13 for $\Lambda = \mathbb C$ and \emph{loc. cit.}, Prop 1.17 for $\Lambda = \overline{\mathbb Q}_\ell$, while (ii) is  \emph{loc. cit.}, Lem 1.6.
\qed

When $\Lambda = \overline{\mathbb Q}_\ell$, we will define in Section \ref{subsection The singularity stack}
the singularity stack of $\mathrm{Par}_G$ and show that it is a good substitute for $\Psi(G/F)$ in the case $\Lambda=\mathbb C$.

\subsection{Regular conormal bundles}

We will see that Arthur packets are constructed by pushing-forward sheaves to $\mathrm{Par}_G$ from its conormal bundles. We first recall from \cite{Cunningham-Voganish-begins} how this is done when $\Lambda = \mathbb C$, and then propose an analogous construction for $\Lambda = \overline{\mathbb Q}_\ell$.

First put $\Lambda = \mathbb C$. Given an infinitesimal parameter $\lambda$, define the {\bf Vogan variety} 
$$V_\lambda\text{ (resp. ${}^tV_\lambda$)}:=\{ x \in  \mathrm{Lie} Z_{\hat G}(\lambda(I_F)) : \mathrm{Ad}(\lambda (\dot{\mathrm{ Fr} }) ) x = q x\text{ (resp. $q^{-1} x$})\},$$
and denote by $H_\lambda$ the stabilizer of $\lambda$ in $\hat G(\Lambda)$. Clearly, $H_\lambda$ acts on $V_\lambda$ by conjugation. Recall the `forgetful' map $  \Phi(G/F) \rightarrow \Lambda(G/F)$ from (\ref{forgetful maps on parameters}), and denote by $\Phi_\lambda(G/F)$ the preimage of $\lambda$ under this map. By \cite[Sec 4.3]{Cunningham-Voganish-begins}, $\Phi_\lambda(G/F)$ canonically parametrizes the orbit space $V_\lambda/ H_\lambda$ bijectively. Moreover, there is a partial order on $V_\lambda/ H_\lambda$: 
$$C\leq C' 
\quad \Leftrightarrow \quad 
C\subset \overline{C'},$$
which induces a partial order on $\Phi_\lambda(G/F)$ via the canonical bijection $\varphi\leftrightarrow C_\varphi$ above.

Let $C$ be an $H_\lambda$-orbit of $V_\lambda$. Define, as a subset of the cotangent bundle $T^*(V_\lambda)  = V_\lambda \times {}^tV_\lambda$, the conormal bundle along $C$ as 
\begin{equation}
\label{conormal bundle description}
T_C^*(V_\lambda) = \{(x,\xi) \in  T^*(V_\lambda): x\in  C\text{ and } [x,\xi]= 0\},
\end{equation}
and the {\bf regular conormal bundle} associated with $C$, i.e., the regular part of $T_C^*(V_\lambda)$, as
\begin{equation}
\label{regular conormal bundle, definition}
\mathbb O_C:= T_C^*(V_\lambda)_{\mathrm{reg}} = T_C^*(V_\lambda) \setminus \bigcup_{C\subsetneq \overline{C'}}T_{C'}^*(V_\lambda),
\end{equation}
and put $\mathbb O_\varphi = \mathbb O_{C_\varphi}$.  Note that the $H_\lambda$-actions on $V_\lambda$ and $V_\lambda^*$ induce corresponding actions on $\mathbb O_\varphi$.

Dually, we can also define, for each $H_\lambda$-orbit $B$ of $V_\lambda^*$, the conormal bundle $T_B^*(V_\lambda^*)$ along $B$. Pyasetskii duality \cite[Cor 2]{Pyasetskii:1975aa} asserts that, for each $H_\lambda$-orbit $C$ of $V_\lambda$, there is a unique $H_\lambda$-orbit $C^*$ of $V_\lambda^*$ such that 
$$ \overline{T_C^*(V_\lambda)} =  \overline{T_{C^*}^*(V^*_\lambda)},  $$
thereby inducing a duality bijection between  $V_\lambda/H_\lambda$ and $V_\lambda^*/H_\lambda$.

Denote by $A_{C_\varphi}$ (resp. $A_{\mathbb O_\varphi}$) the equivariant fundamental group of $C_\varphi$ (resp. $ \mathbb O_\varphi$) (i.e., the $\pi_1$-group defined using $H_\lambda$-equivariant coverings). The projections $C_\varphi\twoheadleftarrow \mathbb O_\varphi\twoheadrightarrow  C^*_\varphi$ induce group morphisms
$A_{C_\varphi}\leftarrow A_{\mathbb O_\varphi}\rightarrow  A_{C^*_\varphi}$, which are again surjective.

Given $\varphi\in \Phi(G/F)$ (resp. $\psi\in \Psi(G/F)$), we put $A_\varphi: = \pi_0(Z_{\hat G}(\varphi))$ (resp. $A_\psi: = \pi_0(Z_{\hat G}(\psi))$). If 
$\psi\mapsto \varphi\mapsto \lambda$ under (\ref{forgetful maps on parameters}), we can interpret $A_\varphi$ as $\pi_0(Z_{H_\lambda}C_\varphi)$ and $A_\psi$ as $\pi_0(Z_{H_\lambda}\mathbb O_\varphi)$, where $Z_{H_\lambda}C_\varphi$ is the ${H_\lambda}$ stabilizer of any point in $C_\varphi$, and $Z_{H_\lambda}\mathbb O_\varphi$ likewise. \cite[Lem 4.4 (resp. Prop 6.11)]{Cunningham-Voganish-begins} then  implies that $A_{C_\varphi} \cong A_\varphi$ (resp. $A_{\mathbb O_\varphi}\cong A_\psi$ if $\varphi$ is of Arthur type $\psi$). Therefore, it induces an equivalence of categories
$$\mathcal{LS}([C_\varphi/H_\lambda]) \xrightarrow{\sim} \mathcal Rep(A_\varphi)\quad\text{ (resp. }\mathcal{LS}([\mathbb O_\varphi/H_\lambda]) \xrightarrow{\sim} \mathcal Rep(A_\psi)).$$

\begin{conj}
$\mathcal Perf( C_\varphi)  \simeq   \mathcal D^b([*/ A_\varphi]) \simeq   \mathcal Rep_{\mathrm{fin}}(A_\varphi)$. Moreover, $\mathcal Perf( \mathbb O_\varphi) \simeq   \mathcal D^b([*/ A_\psi ]) \simeq  \mathcal Rep_{\mathrm{fin}}(A_\psi)$ whenever $\varphi$ is of Arthur type $\psi$.
\qed\end{conj}

\subsection{The singularity stack}
\label{subsection The singularity stack}

We now consider a similar construction when $\Lambda = \overline{\mathbb Q}_\ell$. We again adopt the convention $\varphi=(\lambda,N)\in \mathrm{Par}_G$ for Langlands parameters. Let $\mathbb L_{\mathrm{Par}_G/\Lambda}$ be the cotangent complex of $\mathrm{Par}_G$. Since $\mathrm{Par}_G/\Lambda$ is a local
complete intersection, $\mathbb L_{\mathrm{Par}_G/\Lambda}$ is a perfect complex with $H^i(\mathbb L_{\mathrm{Par}_G/\Lambda}) =0$ for all $i$ except $i=-1$ or 0 \cite[Prop 92.14.4]{stacks-project}, i.e., $\mathbb L_{\mathrm{Par}_G/\Lambda}$ is supported only in degrees $i\in \{-1,0,1\}$.

\begin{prop}\label{pulling back Cotangent space}
\cite[Th 5.4]{FS-notes}, \cite[Cor VIII.2.3]{FS-main} Let $x_\varphi$ be the $\Lambda$-point corresponding to $\varphi \in \mathrm{Par}_G$, then the fibre
$$x_\varphi^*(\mathbb L_{\mathrm{Par}_G/\Lambda}) = 
 \Gamma(W_F,\hat{\mathfrak g}^*_\varphi(1))[1].$$
Here $\hat{\mathfrak g}^*_\varphi(1)$ denotes the $W_F$-fixed point space of $\hat{\mathfrak g}^*$ equipped with the $W_F$-action 
$$(\lambda(\dot{\mathrm{ Fr} }), v)\mapsto   q \mathrm{Ad} (\lambda(\dot{\mathrm{ Fr} })) ({}^{\dot{\mathrm{ Fr} } } v ), \quad v\in \hat{\mathfrak g}^*, $$
and which is trivial on $I_F$.
\qed\end{prop}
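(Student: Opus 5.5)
We will compute the fibre of the cotangent complex at $x_\varphi$ by working first on the atlas $Z^1(W_F,\hat G)$ and then passing to the quotient by $\hat G$. Since everything is local around $x_\varphi$, we may fix an open subgroup $P\subseteq P_F$ with $\varphi$ trivial on $P$. By \cite[Th I.8.1]{FS-main}, $Z^1(W_F/P,\hat G)$ is an open and closed affine piece of $Z^1(W_F,\hat G)$. We then replace $W_F/P$ by the discrete dense subgroup $W=W(P)$. Concretely, we take the group generated by a Frobenius lift $\dot{\mathrm{Fr}}$ and a topological generator $\sigma$ of tame inertia, together with the finite group $P_F/P$, subject to the relation $\dot{\mathrm{Fr}}\,\sigma\,\dot{\mathrm{Fr}}^{-1}=\sigma^q$.

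\textbf{Step 1: a presentation.} Because $P_F/P$ is finite and $\ell$ is invertible in $\Lambda$, cocycles on $P_F/P$ are rigid. So, locally around $\varphi$, $Z^1(W/P,\hat G)$ is the fibre of the map
$$\hat G\times\hat G\to\hat G,\quad (F,S)\mapsto FSF^{-1}S^{-q},$$
taken over the identity and twisted by the finite wild data. This exhibits $Z^1$ as a complete intersection (cf. \cite[Def VIII.1.3]{FS-main}). The cotangent complex at $\varphi$ is therefore the two-term complex $[\hat{\mathfrak g}^*\to(\hat{\mathfrak g}^*)^2]$, dual to the differential of the relation.

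\textbf{Step 2: identify the differential.} We identify this linearised relation with the continuous cochain complex computing $C^\bullet(W,\hat{\mathfrak g})$ in degrees $1$ and $2$, with $W$ acting via $\mathrm{Ad}\circ\varphi$. This gives $T_\varphi Z^1=Z^1(W,\hat{\mathfrak g})$ together with an obstruction space $H^2$. Next we pass to the quotient by $\hat G$. This contributes the map $\hat{\mathfrak g}\to Z^1$, $X\mapsto(w\mapsto X-\mathrm{Ad}\varphi(w)X)$, in degree $-1$ of the tangent complex. Hence
$$x_\varphi^*\mathbb T_{\mathrm{Par}_G}\simeq R\Gamma(W_F,\hat{\mathfrak g}_\varphi)[1].$$
Here we use that $W$ has cohomological dimension $2$, so this is $C^\bullet$ truncated correctly.

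\textbf{Step 3: dualise.} The cotangent fibre is the $\Lambda$-linear dual, $R\Gamma(W_F,\hat{\mathfrak g}_\varphi)^\vee[-1]$. We then apply local Tate duality for $W_F$ with $\ell$-adic coefficients, in the continuous/Weil-group version (Euler characteristic zero, dualising module $\Lambda(1)$ in degree $2$):
$$R\Gamma(W_F,M)^\vee\simeq R\Gamma(W_F,M^*(1))[2].$$
This yields $R\Gamma(W_F,\hat{\mathfrak g}^*_\varphi(1))[1]$, which is the claimed formula. The $q$-twist in the Frobenius action described in the statement is exactly the cyclotomic twist $(1)$, since $\sigma\mapsto\sigma^q$ means Frobenius acts on $I_F/P_F\otimes\Lambda$ through $q$ (with the geometric/arithmetic normalisation fixing which power of $q$ appears).

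\textbf{Main obstacle.} The crux is Step 2: verifying that the naive complete-intersection presentation computes the honest cotangent complex of the ind-stack. This requires showing that the discretisation $W\subset W_F/P$ does not change $R\Gamma$, which follows from \cite[Th VIII.3.6]{FS-main}-type density arguments together with continuity of the $\ell$-adic cochains. It also requires that flatness and lci, as in \cite[Def VIII.2.6]{FS-main}, guarantee there is no higher derived structure, so that the fibre of $\mathbb L$ is exactly the dual of the linearisation.
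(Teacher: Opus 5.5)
Your proposal is correct and is essentially the argument behind the result the paper only cites (Fargues--Scholze, Cor.~VIII.2.3): the lci presentation of $Z^1(W_F/P,\hat G)$, together with the orbit map, identifies $x_\varphi^*\mathbb T_{\mathrm{Par}_G}$ with $R\Gamma(W_F,\hat{\mathfrak g})[1]$, and local Tate duality for $W_F$ dualises this to $R\Gamma(W_F,\hat{\mathfrak g}^*(1))[1]$, where you implicitly and rightly read the statement's $\Gamma$ as derived invariants (with underived invariants the formula already fails for tori). Two small repairs: rigidity on $P_F/P$ uses that $|P_F/P|$, a power of $p$, is invertible in $\Lambda$ (not $\ell$), and your complex $\hat{\mathfrak g}\to\hat{\mathfrak g}^2\to\hat{\mathfrak g}$ computes $R\Gamma(W,\hat{\mathfrak g})$ because the presentation complex of the one-relator group $\langle \mathrm{Fr},\sigma\mid \mathrm{Fr}\,\sigma\,\mathrm{Fr}^{-1}\sigma^{-q}\rangle$ is aspherical (Lyndon), which $\mathrm{cd}\,W=2$ alone would not give, while the passage from $W$ to continuous $W_F$-cohomology is a direct Hochschild--Serre computation and has nothing to do with the excursion-algebra isomorphism (Th.~VIII.3.6) you point to.
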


The {\bf singularity stack}
$$\mathrm{Sing}_{\mathrm{Par}_G/\Lambda}:= \mathrm{Spec}(\mathrm{Sym}^\bullet_{\mathcal O_{\mathrm{Par}_G}}
H^1(\mathbb L^\vee_{
\mathrm{Par}_G/\Lambda} )) \rightarrow \mathrm{Par}_G$$ 
is the scheme over $\mathrm{Par}_G$ that represents the functor 
$$T /\mathrm{Par}_G \mapsto {H}^{-1} (\mathbb L_{\mathrm{Par}_G/\Lambda}\otimes_{\mathcal O_{\mathrm{Par}_G} }\mathcal O_T).$$
Concretely, if $\mathbb L_{\mathrm{Par}_G/\Lambda}$ is represented by a complex $\mathcal E^{-1}\rightarrow \mathcal E^{0}\rightarrow \mathcal E^{1}$ of sheaves in the derived category, then  
$\mathrm{Sing}_{\mathrm{Par}_G/\Lambda} $ represents the module $\ker( \mathcal E^{-1}_x\rightarrow \mathcal E^{0}_x)$ at $x\in \mathrm{Par}_G$. Hence $\mathrm{Sing}_{\mathrm{Par}_G/\Lambda}$ measures the local singularity structure of $\mathrm{Par}_G$.

\begin{rmk}$\mathrm{Sing}_{\mathrm{Par}_G/\Lambda}$ is denoted by $\mathrm{Arth}_{\hat G} $ in \cite{Arinkin-Gaitsgory-2015}. It plays the role of Arthur parameters in their setting.
\qed\end{rmk}

Proposition \ref{pulling back Cotangent space}
 hence implies a natural embedding
$$\mathrm{Sing}_{\mathrm{Par}_G/\Lambda} \rightarrow [\hat{\mathfrak g}^*/\hat G] \times_{[*/\hat G]}{\mathrm{Par}_G/\Lambda}$$
defined over $\mathrm{Par}_G$. Indeed, one obtains a better described image in $\hat{\mathfrak g}^*$.

\begin{prop}
\label{nilpotent singular support condition}
\cite[Prop VIII.2.11]{FS-notes} Nilpotent singular support condition. With $\Lambda = \overline{\mathbb Q}_\ell$, for any $x\in \mathrm{Par}_G$, we have 
$$x^*( \mathrm{Sing}_{\mathrm{Par}_G/\Lambda})\subset \mathcal N_{\hat{\mathfrak g}^*},$$ 
where $\mathcal N_{\hat{\mathfrak g}^*}$
is the nilpotent cone in ${\hat{\mathfrak g}^*}$.
\qed\end{prop}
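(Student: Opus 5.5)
We will compute the fibre of $\mathrm{Sing}_{\mathrm{Par}_G/\Lambda}$ at $x=x_\varphi$ using Proposition \ref{pulling back Cotangent space}, and then show directly that every element of it is nilpotent. First identify the fibre: since $\mathrm{Par}_G$ is quasi-smooth (lci), $x^*\mathrm{Sing}_{\mathrm{Par}_G/\Lambda}$ is the $\Lambda$-points of $H^{-1}(x^*\mathbb L_{\mathrm{Par}_G/\Lambda})$. By Proposition \ref{pulling back Cotangent space}, $x^*\mathbb L = R\Gamma(W_F,\hat{\mathfrak g}^*_\varphi(1))[1]$, so $H^{-1}(x^*\mathbb L)=H^0(W_F,\hat{\mathfrak g}^*(1))$. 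These are the $\xi\in\hat{\mathfrak g}^*$ invariant under the twisted action. Concretely, $\xi$ is fixed by $\mathrm{Ad}^*\varphi(\tau)$ for $\tau\in I_F$. For the Frobenius it satisfies $q\,\mathrm{Ad}^*(\varphi(\dot{\mathrm{Fr}}))({}^{\dot{\mathrm{Fr}}}\xi)=\xi$. Here the $L$-group action ${}^{\dot{\mathrm{Fr}}}$ is absorbed by working inside ${}^LG$, i.e. we take $g=\varphi(\dot{\mathrm{Fr}})\rtimes\dot{\mathrm{Fr}}$ acting on $\hat{\mathfrak g}^*$ through a finite-order pinned automorphism.

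Next reduce to a statement about an eigenvector. Identify $\hat{\mathfrak g}^*\cong\hat{\mathfrak g}$ via a $\hat G\rtimes W_F$-invariant nondegenerate form (available in characteristic $0$ after splitting off the center). Replacing $\dot{\mathrm{Fr}}$ by a suitable power $\dot{\mathrm{Fr}}^m$ makes the diagram automorphism trivial. Then $\xi$ becomes an element $X\in\hat{\mathfrak g}$ with $\mathrm{Ad}(h)X=q^{-m}X$, where $h=(\varphi(\dot{\mathrm{Fr}})\rtimes\dot{\mathrm{Fr}})^m\in\hat G$. Now apply the standard argument. Choose a faithful representation $\hat G\hookrightarrow \mathrm{GL}_n$ and set $T(X)=\mathrm{tr}(X^k)$. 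Then $\mathrm{tr}(X^k)=\mathrm{tr}((hXh^{-1})^k)=q^{-mk}\mathrm{tr}(X^k)$. Since $q^{-mk}\neq1$ in characteristic $0$, all power traces of $X$ vanish, so $X$ is nilpotent in $\mathfrak{gl}_n$ and hence in $\hat{\mathfrak g}$. Equivalently, every $\hat G$-invariant polynomial on $\hat{\mathfrak g}$ that is homogeneous of positive degree vanishes on $X$, by the same scaling argument. The Chevalley description $\mathcal N=\{f=0\ \forall f\in\Lambda[\hat{\mathfrak g}]^{\hat G}_+\}$ then gives $X\in\mathcal N_{\hat{\mathfrak g}}$, which transports to $\mathcal N_{\hat{\mathfrak g}^*}$.

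Finally, to make this hold scheme-theoretically and not only on $\Lambda$-points, run the same argument universally. Over $Z^1(W_F/P,\hat G)$, the sheaf $H^{-1}$ of the pulled-back cotangent complex is the $W_F$-invariants of $\hat{\mathfrak g}^*(1)$ relative to the universal cocycle. For a homogeneous invariant $f$ of degree $k$, the equation $f(\xi)=q^{mk}f(\xi)$ holds in the coordinate ring of $\mathrm{Sing}$. Since $q^{mk}-1$ is invertible in $\Lambda$, we get $f(\xi)=0$, so $\mathrm{Sing}$ maps into $\mathcal N_{\hat{\mathfrak g}^*}$ as a closed substack.

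The main obstacle is the bookkeeping of the twisted Frobenius action. It involves the cyclotomic twist $(1)$, the $W_F$-action on $\hat G$ through the pinning, and the identification $\hat{\mathfrak g}\cong\hat{\mathfrak g}^*$. The key requirement is that the eigenvalue picked up by a positive-degree invariant is a nontrivial power of $q$ rather than a root of unity. Passing to a power of Frobenius that acts trivially on the based root datum is the step that needs care.
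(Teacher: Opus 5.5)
Your argument is correct and is essentially the standard one behind the cited Fargues--Scholze result; the paper itself gives no argument beyond the citation. You identify the fibre of $\mathrm{Sing}_{\mathrm{Par}_G/\Lambda}$ at $x_\varphi$ with $H^0(W_F,\hat{\mathfrak g}^*_\varphi(1))$, pass to a power of Frobenius acting trivially on $\hat G$ so that every such $\xi$ is an eigenvector of $\mathrm{Ad}^*(\varphi(\dot{\mathrm{Fr}}^m))$ with eigenvalue $q^{\mp m}$, and conclude that every positive-degree $\hat G$-invariant polynomial vanishes on $\xi$ because $q^{mk}\neq 1$ in characteristic $0$. This is the same scaling argument that makes the monodromy $N$ nilpotent, and your universal version over $Z^1(W_F/P,\hat G)$ is a harmless extra, since the fibre is a reduced linear space and containment of its $\Lambda$-points in $\mathcal N_{\hat{\mathfrak g}^*}$ already suffices.
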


Fix an infinitesimal parameter $\lambda$, and denote by $\mathrm{Par}_\lambda\hookrightarrow \mathrm{Par}_G$ be the fibre over $\lambda$. For each $N \in \hat{\mathfrak{g}}$ satisfying (\ref{Action of Frobenius on the Deligne part of N}) and (\ref{Grothendieck quasi-unipotence element}), let $C=C_N\hookrightarrow \mathrm{Par}_\lambda$ be the $H_\lambda$-orbit of $N\in \hat{\mathfrak g}$.
Using an $\mathrm{Ad}(\hat G)$-equivariant identification $\hat{\mathfrak g}\cong \hat{\mathfrak g}^*$, the fibre
$$\mathrm{Sing}_{\lambda,C}:= \mathrm{Sing}_{\mathrm{Par}_G}|_{\mathrm{Par}_\lambda}\times_{\mathrm{Par}_\lambda}C,$$
then admits a concrete description: 
\begin{equation}
\label{sing lambda C}
\mathrm{Sing}_{\lambda,C} =\{(N,X): N\in C\text{ and } X\in \hat{\mathfrak g}^*\text{ such that }[N,X]=0\},
\end{equation}
which is parallel with (\ref{conormal bundle description}). Combining this with Proposition \ref{nilpotent singular support condition}, we see that $\mathrm{Sing}_{\lambda,C}$ plays an analogous role to the conormal bundle $T^*_C(V_\lambda)$ in the case $\Lambda = \mathbb C$.

We remark that, using (\ref{sing lambda C}), we can analogously define the regular part 
$(\mathrm{Sing}_{\lambda,C})_{\mathrm{reg}}$ of $\mathrm{Sing}_{\lambda,C}$ as in (\ref{regular conormal bundle, definition}).

\subsection{Conjectural Arthur packets}
\label{subsection Conjectural Arthur packet in CLLC}

When $\Lambda=\mathbb C$, Arthur packets can be constructed using the vanishing cycle functors from sheaves on Vogan varieties to those on regular conormal bundles. In this section, we recall the construction from \cite{Cunningham-Voganish-begins}, and discuss how we can generalize this construction to the case where $\Lambda=\overline{\mathbb Q}_\ell$.

Let $C$ be an $H_\lambda$-orbit in $V_\lambda$, and $\mathcal D_c^b([V_\lambda / H_\lambda])_C$ be the full subcategory of 
$\mathcal D_c^b([V_\lambda / H_\lambda])$ consisting of complexes of constructible sheaves whose supports contain $C$ \cite[Prop 7.10]{Cunningham-Voganish-begins}. Recall the functor defined in \cite[(7.11)]{Cunningham-Voganish-begins},
\begin{equation}
\label{vanishing cycle functor, C-case}
\mathrm{Ev}_C:\mathcal D^b([V_\lambda / H_\lambda])_C \rightarrow \mathcal D^b([\mathbb O_C/ H_\lambda]\times_\Lambda \mathbb S),
\end{equation}
where $\mathbb S$ denotes the trait over $\Lambda$. We will recall its construction in Subsection \ref{subsubsection Vanishing cycle functors} as an appendix. As an important property,  \cite[Prop 7.20]{Cunningham-Voganish-begins} shows that $\mathrm{Ev}_C$ maps perverse sheaves on $[V_\lambda / H_\lambda]$ to perverse sheaves on $[\mathbb O_C / H_\lambda]$.

Denote the right adjoint of $\mathrm{Ev}_C$ by 
$$\mathrm{Ev}_C^*:\mathcal D^b([\mathbb O_C/ H_\lambda]\times_\Lambda \mathbb S) \to \mathcal D^b([V_\lambda / H_\lambda])_C \hookrightarrow \mathcal D^b([V_\lambda / H_\lambda]).$$
 Let $\underline{\Lambda}_{}$ be the skyscraper sheaf supported on $C$ with values in $\Lambda$.
\begin{dfn}
\label{Arthur sheaf Lie algebra}
We call $\mathcal A_C = \mathrm{Ev}_C^* (\underline{\Lambda}_{[\mathbb O_C / H_\lambda]\times_\Lambda \mathbb S} )$ an {\bf Arthur sheaf}. 
\qed\end{dfn}
Compare this definition with the construction of Fargues sheaves in Conjecture \ref{Fargues-conjecture-detailed}.

If $\mathrm{Ev}_C \mathcal P$ is of finite rank for all $\mathcal P \in \mathcal Per([V_\lambda / H_\lambda])^{\mathrm{sim}}_{/\mathrm{iso}}$, the set of isomorphism classes of simple objects in $\mathcal Per([V_\lambda / H_\lambda])$, and if this set is finite, then by induction (compare with (\ref{induced regular representation of S_varphi})) we can express the Arthur sheaf as a finite direct sum: 
$$\mathcal A_C \cong  \bigoplus_{\mathcal P \in \mathcal Per([V_\lambda / H_\lambda])^{\mathrm{sim}}_{/\mathrm{iso}}
}\mathcal P^{\oplus \mathrm{rank}(\mathrm{Ev}_C \mathcal P)},$$  
which agrees with the description in \cite[(10.15)]{Cunningham-Voganish-begins}. In particular, we see that $\mathcal A_C$ is a perfect sheaf under the above assumptions, which are satisfied by the examples in \emph{loc. cit.}.

When $\Lambda = \overline{\mathbb Q}_\ell$, we mimic (\ref{vanishing cycle functor, C-case}) and obtain a functor, which we still denote by $\mathrm{Ev}_C$,  
$$\mathrm{Ev}_C:\mathcal D^b([V_\lambda / H_\lambda])_C \rightarrow \mathcal D^b([ (\mathrm{Sing}_{\lambda,C})_{\mathrm{reg}} / H_\lambda]\times_\Lambda \mathbb S),$$ 
and define the Arthur sheaf $\mathcal A_C$ just as in Definition \ref{Arthur sheaf Lie algebra}.
\begin{conj}
\label{main conjecture}
\begin{enumerate}[(i)]
\item $\mathcal A_C$ is a perfect complex (see \cite[Cor VIII.2.10]{FS-main}).

\item The Arthur packet $\Pi_\psi$ is given by $\mathcal A_C * \mathcal W_{\mathfrak f}$. \qedhere

\end{enumerate}
\end{conj}

The perfectness condition on $\mathcal A_C$ is required in order to fit into the Fargues-Scholze framework, as we saw in the context of the Hecke spectral action. With the assumptions after Definition \ref{Arthur sheaf Lie algebra}, this condition on $\mathcal A_C$ is automatic. In general, Conjecture \ref{main conjecture}(i) should follow from Arinkin-Gaitsgory's Theorem, which will be stated in Proposition \ref{Arinkin-Gaitsgory's Theorem} below.

Let $X/S$ be syntomic, i.e., flat and a local complete intersection. Let $\mathbb L_{X/S}$ be the cotangent complex and $\mathrm{Sing}_{X/S} := \mathrm{Spec}(\mathrm{Sym}^\bullet_{\mathcal O_X} H^{-1}(\mathbb L_{X/S}) ^\vee) \rightarrow X$ be the stack of singularities. For $\mathcal E\in \mathcal Coh^b(X)$, denote its singular support \cite[Def VIII.2.8]{FS-main} by
$\mathrm{SingSupp}(\mathcal E)\subset \mathrm{Sing}_{X/S}$.
\begin{prop}
\label{Arinkin-Gaitsgory's Theorem}
(\cite[Th. 4.2.6]{Arinkin-Gaitsgory-2015}, see also \cite[Th VIII.2.9]{FS-main}). $\mathcal E$ is a perfect complex if and only if  $\mathrm{SingSupp}(\mathcal E)$ is contained in the zero section (in the sheaf ${H}^{-1} (\mathbb L_{X/S})$ over $X$) of $\mathrm{Sing}_{X/S}$. 
\qed\end{prop}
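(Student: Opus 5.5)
We plan to prove Proposition \ref{Arinkin-Gaitsgory's Theorem} by reducing to a local statement on an affine chart presented as a derived complete intersection. Both conditions are local on $X$ in the smooth (or Zariski) topology, and compatible with flat base change on $S$. So we may assume $S=\mathrm{Spec}\,A$ is affine and $X=\mathrm{Spec}\,B$ with
\[
B = P/(f_1,\dots,f_c),
\]
where $P$ is smooth over $A$ and $f_1,\dots,f_c$ is a regular sequence; this uses that $X/S$ is syntomic. The cotangent complex is then $[I/I^2\to \Omega_P\otimes B]$, with $I/I^2\cong B^c$ free. Hence $H^{-1}(\mathbb L_{X/S})$ is the kernel of the differential $B^c\to \Omega_P\otimes B$. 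At a point $x$, the fibre of $\mathrm{Sing}_{X/S}$ consists of covectors $\xi\in (k(x)^c)^\vee$ whose image $\sum \xi_i\,df_i(x)$ vanishes. This matches the description used in Proposition \ref{pulling back Cotangent space} and in (\ref{sing lambda C}).

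Next we bring in the Gulliksen/Eisenbud operators. Koszul duality for $B=P/(f)$ gives a graded action of $\mathrm{Sym}_B(H^1(\mathbb L^\vee_{X/S}))$, with generators in cohomological degree $2$, on $\mathrm{Ext}^\bullet_B(\mathcal E,\mathcal E)$ for every $\mathcal E\in\mathcal Coh^b(X)$. This action is the one that defines $\mathrm{SingSupp}(\mathcal E)$ in \cite[Def VIII.2.8]{FS-main}: the singular support is the support of the graded module $\mathrm{Ext}^\bullet(\mathcal E,\mathcal E)$, a conic closed subset of $\mathrm{Sing}_{X/S}$. The key classical input is Gulliksen's theorem, which says $\mathrm{Ext}^\bullet_B(\mathcal E,k(x))$ is finitely generated over $\mathrm{Sym}\,k(x)^c$.

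The two directions then go as follows.

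\textbf{Perfect implies support in the zero section.} If $\mathcal E$ is perfect, then $\mathrm{Ext}^n(\mathcal E,\mathcal E)$ vanishes for $n\gg0$. The positive-degree operators are therefore nilpotent on it, so its support lies in the zero section.

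\textbf{Support in the zero section implies perfect.} Suppose the support lies in the zero section. Then the fibrewise support of $\mathrm{Ext}^\bullet(\mathcal E,k(x))$ is $\{0\}$ at each point $x$; one has to check that the singular support controls the supports of these fibre modules, via $\mathrm{Hom}(\mathcal E,\mathcal E\otimes k(x))$ and Nakayama. A finitely generated graded $\mathrm{Sym}$-module supported at the origin is finite dimensional. Hence $\mathrm{Ext}^n(\mathcal E,k(x))=0$ for $n\gg0$, so $\mathcal E$ has finite Tor-amplitude at $x$. Being coherent, $\mathcal E$ is then perfect near $x$, and quasi-compactness of the support finishes the argument.

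The main obstacle is making the Eisenbud operators canonical and independent of the presentation, so that they glue into an action of $\mathrm{Sym}\,H^{1}(\mathbb L^\vee)$ over all of $X$ (and descend to stacks such as $\mathrm{Par}_G$). The local finiteness statements are classical. For the gluing I would follow Arinkin--Gaitsgory: realize $X$ as the fibre product $P\times_{\mathbb A^c}0$ and use the $\mathbb G_m$-equivariant Koszul duality $\mathcal Coh(X)$ versus modules over $\mathrm{Sym}(\mathbb L^\vee[-1])$. I would then check functoriality under smooth pullback, so the statement descends to Artin stacks via a smooth atlas.
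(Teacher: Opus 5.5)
The paper gives no argument of its own here; it only cites Arinkin--Gaitsgory (and FS, Th.~VIII.2.9). Your proof takes a genuinely different, classical route. It uses Eisenbud operators on a chart $B=P/(f)$, Gulliksen's finiteness theorem, and the support-variety criterion (a finitely generated graded module supported at the origin has finite length, so $\mathrm{Ext}^n(\mathcal E,k(x))=0$ for $n\gg0$). Arinkin--Gaitsgory instead work with quasi-smooth DG schemes, obtain the action canonically from Hochschild cochains, and get their finiteness from Koszul duality for $U\times_V\{0\}$. Your route is more elementary and makes the pointwise mechanism transparent. Theirs is presentation-free and functorial, which is what is needed to define $\mathrm{SingSupp}$ on stacks such as $\mathrm{Par}_G$ at all. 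Since both conditions are local, what you actually borrow from them is not gluing. It is the comparison, on a chart, between the canonical action of FS, Def.~VIII.2.8 and the Eisenbud operators. In particular you need that $\sum_i v(f_i)\chi_i$ acts by zero for every vector field $v$ on $P$, which is what makes the action factor through $\mathrm{Sym}\,H^1(\mathbb L^\vee_{X/S})$.

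Two points need fixing.

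First, Gulliksen's theorem requires $\mathcal E$ to have finite projective dimension over $P$, for instance $P$ regular. With $P$ merely smooth over an arbitrary $A$ it fails, and so does the statement itself. Take $X=S=\mathrm{Spec}\,k[t]/(t^2)$: then $\mathbb L_{X/S}=0$, so every object has singular support in the zero section, yet $k$ is not perfect. You must therefore assume $S$ regular, for example $S=\mathrm{Spec}\,\Lambda$ as in the paper's application and in Arinkin--Gaitsgory's setting over a field.

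Second, the step you leave open should not go through $\mathcal E\otimes k(x)$ and Nakayama. Exchanging $R\mathrm{Hom}(\mathcal E,-)$ with $-\otimes^{L}k(x)$ is not available before you know $\mathcal E$ is perfect. Use centrality instead. The operators act on $\mathrm{Ext}^\bullet(\mathcal E,k(x))$ by Yoneda composition with $\chi_i\cdot\mathrm{id}_{\mathcal E}\in\mathrm{Ext}^2(\mathcal E,\mathcal E)$. By Gulliksen again (with d\'evissage for complexes), $\mathrm{Ext}^\bullet(\mathcal E,\mathcal E)$ is finitely generated. So support in the zero section gives $(\chi_1,\dots,\chi_c)^N\cdot\mathrm{id}_{\mathcal E}=0$ for some $N$. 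Hence $(\chi_1,\dots,\chi_c)^N$ kills the finitely generated $k(x)[\chi_1,\dots,\chi_c]$-module $\mathrm{Ext}^\bullet(\mathcal E,k(x))$, which is therefore finite-dimensional. With these adjustments your argument is complete.
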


\subsubsection{Vanishing cycle functors}
\label{subsubsection Vanishing cycle functors}

Let $C$ be an $H_\lambda$-orbit of $V_\lambda$. In this appendix, we recall the definition of the functor in (\ref{vanishing cycle functor, C-case}) from \cite[(7.11)]{Cunningham-Voganish-begins}. The original setup can be traced back to \cite[Expos{\'e} XIII]{SGA7II}.

We recall the trait $\mathbb S := \mathrm{Spec}(\Lambda[[t]])$ consisting of a generic point $\eta$ and a closed point $z$, as in the following diagram:
$$i:z = \mathrm{Spec}(\Lambda) \rightarrow \mathbb S \leftarrow \eta = \mathrm{Spec}(\Lambda((t)) ) : j.$$
Let $t: \mathbb S \to \mathbb A^1$ be the local coordinate, which corresponds to the canonical injection $\Lambda [t] \hookrightarrow \Lambda[[t]]$.

Let $X$ be an algebraic stack over $\Lambda$, and view $X $ as $X\times z$. Define the functor $R\Phi_X $ as the cone of the transformation defined by the following adjunction:
$$R\Phi_X:=\mathrm{cone}( i^* \to i^* j_* j^*).$$
For any sheaf $\mathcal F$  on $X$, the resulting sheaf $R\Phi_X\mathcal F$ can be defined on $X\times_z \mathbb S$. We remark that, since our base field $\Lambda$ is assumed to be algebraically closed, we do not need to pass through any Galois invariant subsheaves as done in \emph{loc. cit}.

Let $\mathcal F$ be a sheaf on the quotient stack $[V_\lambda/H_\lambda]$, viewed as an $H_\lambda$-equivariant sheaf on $V_\lambda$. Put $U = \mathrm{supp}\mathcal F \times C^*$, and let $f = \left<\cdot ,\cdot \right>|_{U}:U\to \mathbb A^1$ denote the restriction to $U$ of the canonical dual pairing of $\hat{ \mathfrak {g}}\times \hat {\mathfrak {g}}^*$. Put $X = U\times_{(f,\mathbb A^1,t)}\mathbb S$, and let $\pi: X\to U$ denote the projection map. For any sheaf $\mathcal G$ on $U$, we define 
$$R\Phi_f( \mathcal G): = R\Phi_{X}(\pi^* \mathcal G).$$
Now put $\mathcal G = \mathcal F|_{\mathrm{supp}\mathcal F} \boxtimes \mathbf 1_{C^*}$. The vanishing cycle functor is defined by 
$$\mathrm{Ev}_C(\mathcal F) = (R\Phi_f(\mathcal G))|_{\mathbb O_C\times_\Lambda \mathbb S}.$$

\subsection{Examples}

We provide two examples: one concerning tori, which is essentially the case of $\mathrm{GL}_1$, and another on $\mathrm{PGL}_2$, the `smallest' non-abelian case.

\subsubsection{Tori}

If $T$ is a split torus, then there is no non-zero nilpotent element in Lie$(\hat T)$, and hence $V_\lambda $ is a point and $H_\lambda = \hat T$. Moreover, $\hat T$ is a smooth group, and so each fibre of $\mathrm{Sing} \to \mathrm{Par}_G$ is just a point, i.e., $\mathrm{Sing}_\lambda =  [*/\hat T ] $. Our construction directly gives $\mathcal A_\varphi = \mathcal E_\varphi$, and hence our conjecture is compatible with Fargues' Conjecture \ref{Fargues-conjecture-detailed}.

\subsubsection{PGL$_2$}

Consider the parameter $\lambda = \nu^{-1/2}\oplus \nu^{-1/2}$ of $G = \mathrm{PGL}_2$. In this case, $V_\lambda = \mathbb A^1$ and $H_\lambda = \mathbb G_m$ acts by $(t,x)\mapsto t^2x$. Let $C_0$ (resp. $C_1$) be the zero (resp. non-zero) orbit. Both orbits are of Arthur type, namely:
$$\psi_0(w,a,b) = [2](b), \quad \psi_1(w,a,b) = [2](a),$$
where $[n]$ is the $n$-dimensional irreducible algebraic representation of $\mathrm{SL}_2(\Lambda)$, and $(m)$ denotes the $m$th monodromy.

There are three simple perverse sheaves on $[V_\lambda/H_\lambda]$, up to isomorphism:
$$\mathbf 1_{C_0}^\sharp:=\mathrm{IC}(C_0, \mathbf 1_{C_0}), \quad \mathbf 1_{C_1}^\sharp:=\mathrm{IC}(C_1, \mathbf 1_{C_1}), \quad \mathcal E_{C_1}^\sharp:=\mathrm{IC}(C_1, \mathcal E_{C_1}).$$
According to the results in \cite[Sec 12.3]{Cunningham-Voganish-begins}, they correspond to the following representations
$$(G_0,\mathbf  1_{G_0}), \quad (G_0, \mathrm{St}_{G_0}), \quad (G_1, \mathbf 1_{G_1}).$$
where $G_0 = G$ and $G_1 = G_{1/2}$ as described in Section \ref{subsection Bun-G}. To explain, by Kazhdan-Lusztig's theory \cite[Th 7.12]{Kazhdan-Lusztig}, the generic representation $\mathrm{St}_{G_0}$ should correspond to the open stratum with trivial monodromy. This correspondence differs from the one in (\ref{subsubsection Example: GL2}), which can be explained by applying a unipotent linear change on the semi-simplifications of the relevant sheaves (\emph{cf.} the geometric multiplicity matrix in \cite[Sec 12.2.2]{Cunningham-Voganish-begins}):
$$(\mathbf 1_{C_0}^\sharp)^{\mathrm{ss}} = (j_{C_0,!}\mathbf 1_{C_0})^{\mathrm{ss}},\quad 
(\mathbf 1_{C_1}^\sharp )^{\mathrm{ss}}= (j_{C_1,!}\mathbf 1_{C_1})^{\mathrm{ss}} - (j_{C_0,!}\mathbf 1_{C_0})^{\mathrm{ss}},$$
followed by a duality on orbits $C_0\leftrightarrow C_1$ \cite[Sec 12.2.1]{Cunningham-Voganish-begins}.

The regular conormal bundles are given by 
\begin{align*} 
\mathbb O_0 &= \{0\}\times( \mathbb A^1\smallsetminus\{0\} ), \quad 
\mathbb O_1 = (\mathbb A^1\smallsetminus\{0\})\times \{0\}, 
\end{align*}
with equivariant fundamental groups
\begin{align*} 
A_{\mathcal C_0} = 1\xleftarrow{}  A_{\mathbb O_0} & = \{\pm 1\} \xrightarrow{\mathrm{id}} A_{\mathcal C_1}^*  = \{\pm 1\} ,
\\
A_{\mathcal C_1} = \{\pm 1\} \xleftarrow{\mathrm{id}} 
 A_{\mathbb O_1}&= \{\pm 1\} \xrightarrow{} A_{\mathcal C_0}^*  = 1.
\end{align*}
Following the calculations in \cite[Sec 12]{Cunningham-Voganish-begins}, the vanishing cycles are given by
\begin{align*} 
\mathrm{Ev}_{C_0}&:\mathrm{IC}(\mathbf 1_{C_0})\mapsto \mathbf 1_{\mathbb O_0}, \quad \mathrm{IC}(\mathbf 1_{C_1}) \mapsto 0, \quad \mathrm{IC}(\mathcal E_{C_1})\mapsto \mathrm{IC}(\mathcal E_{\mathbb O_0})[1],
\\
\mathrm{Ev}_{C_1}&:\mathrm{IC}(\mathbf 1_{C_0})\mapsto 0, \quad \mathrm{IC}(\mathbf 1_{C_1}) \mapsto \mathbf 1_{\mathbb O_1}, \quad \mathrm{IC}(\mathcal E_{C_1})\mapsto \mathrm{IC}(\mathcal E_{\mathbb O_1}),
\end{align*}
and hence the Arthur sheaves are given by 
\begin{align*} 
\mathcal A_{C_0} = \mathrm{IC}(\mathbf 1_{C_0}) \oplus \mathrm{IC}(\mathcal E_{C_1}),\quad  
\mathcal A_{C_1} = \mathrm{IC}(\mathbf 1_{C_1}) \oplus \mathrm{IC}(\mathcal E_{C_1}).
\end{align*}
Assuming Conjecture (\ref{Fargues-conjecture-individual}) for individual representations, we obtain the Arthur packets via the action of the Arthur sheaves on the Whittaker sheaves: 
$$\Pi_{\psi_0} = \{\mathbf 1_{G_0}, \mathbf 1_{G_1}\}, \quad \Pi_{\psi_1} = \{\mathrm{St}_{G_0}, \mathbf 1_{G_1}\}.$$

\addcontentsline{toc}{section}{References} 
\newcommand{\etalchar}[1]{$^{#1}$}

\end{document}